\documentclass[preprint]{elsarticle}
\usepackage{amsmath,amsthm,amsfonts}
\newcommand{\Hilb}{\mathcal{H}}
\newcommand{\iprod}[1]{\langle#1\rangle}
\newcommand{\Dt}{\Delta t}
\newcommand{\Utilde}{\widetilde U}
\newcommand{\betatilde}{\tilde\beta}
\newcommand{\Lapl}{\mathcal{L}}
\newcommand{\Li}{\operatorname{Li}}
\newcommand{\C}{\mathbb{C}}
\newcommand{\res}{\operatorname*{res}}
\newcommand{\Contour}{\mathcal{C}}
\newproof{plainproof}{Proof}
\newtheorem{theorem}{Theorem}

\newtheorem{lemma}[theorem]{Lemma}
\begin{document}

\begin{frontmatter}

\title{Time-stepping error bounds for fractional diffusion problems 
with non-smooth initial data}

\author[unsw]{William McLean\fnref{arc,visit}}
\address[unsw]{School of Mathematics and Statistics, 
The University of New South Wales, Sydney 2052, Australia}
\ead{w.mclean@unsw.edu.au}

\author[kfupm]{Kassem Mustapha\fnref{arc}}
\address[kfupm]{Department of Mathematics and Statistics,
King Fahd University of Petroleum and Minerals,
Dhahran, 31261, Saudi Arabia}
\ead{kassem@kfupm.edu.sa}

\fntext[arc]{We thank the Australian Research Council and the 
summer visit program of the KFUPM for their financial support of this 
project.}
\fntext[visit]{The first author completed part of this research
during an extended visit to the University of Otago, Dunedin.} 

\begin{abstract}
We apply the piecewise constant, discontinuous Galerkin method to
discretize a fractional diffusion equation with respect to time. 
Using Laplace transform techniques, we show that the method is first
order accurate at the $n$th time level~$t_n$, but the error bound
includes a factor $t_n^{-1}$ if we assume no smoothness of the initial
data.  We also show that for smoother initial data the growth in the
error bound as~$t_n$ decreases is milder, and in some cases absent
altogether. Our error bounds generalize known results for the
classical heat equation and are illustrated for a model problem.
\end{abstract}
\begin{keyword}
Discontinuous Galerkin method, implicit Euler method, Laplace
transform, polylogarithm.
\MSC[2010] 
65M15, 
35R11, 
45K05, 
44A10. 
\end{keyword}

\end{frontmatter}
\section{Introduction}
Consider an initial-value problem for an abstract, time-fractional 
diffusion
equation~\cite[p.~84]{KlafterSokolov2011}
\begin{equation}\label{eq: ivp}
\partial_tu+\partial_t^{1-\nu}Au=0
	\quad\text{for $t>0$,}
	\quad\text{with $u(0)=u_0$ and $0<\nu<1$.}
\end{equation}
Here, we think of the solution~$u$ as a function 
from~$[0,\infty)$ to a Hilbert space~$\Hilb$, with 
$\partial_tu=u'(t)$ the usual derivative with respect to~$t$, and with
\[
\partial_t^{1-\nu}u(t)=\frac{\partial}{\partial t}\int_0^t
  \frac{(t-s)^{\nu-1}}{\Gamma(\nu)}\,u(s)\,ds
\]
the Riemann--Liouville fractional derviative of order~$1-\nu$.
The linear operator~$A$ is assumed to be self-adjoint, 
positive-semidefinite and densely defined 
in~$\Hilb$, with a complete orthonormal
eigensystem $\phi_1$, $\phi_2$, $\phi_3$, \dots.  We further assume
that the eigenvalues of~$A$ tend to infinity.  Thus,
\[
A\phi_m=\lambda_m\phi_m,\quad
\iprod{\phi_m,\phi_n}=\delta_{mn},\quad
0\le\lambda_1\le\lambda_2\le\lambda_3\le\cdots,
\]
where $\iprod{u,v}$ is the inner product in~$\Hilb$; the corresponding
norm in~$\Hilb$ is denoted by~$\|u\|=\sqrt{\iprod{u,u}}$.  In
particular, we may take $Au=-\nabla^2u$ and $\Hilb=L_2(\Omega)$ for a
bounded spatial domain~$\Omega$, with~$u$ subject to homogeneous 
Dirichlet or Neumann boundary conditions on~$\partial\Omega$.  
Our problem~\eqref{eq: ivp} then reduces to the classical heat 
equation when~$\nu\to1$.

Many authors have studied techniques for the time
discretization of~\eqref{eq: ivp}, but obtaining sharp error bounds
has proved challenging.  In studies of explicit and implicit finite
difference schemes~\cite{ChenLiuAnhTurner2012,Cui2009,
LanglandsHenry2005,Mustapha2011,QuintanaYuste2013,ZhangSunLiao2014}
the error analyses typically assume that the solution~$u(t)$ is
sufficiently smooth, including at~$t=0$, which amounts to imposing 
compatibility conditions on the initial data and source term.  In our 
earlier work on discontinuous Galerkin (DG)
time-stepping~\cite{McLeanMustapha2009,MustaphaMcLean2012,
MustaphaMcLean2013}, we permitted more realistic behaviour, allowing 
the derivatives of~$u(t)$ to be unbounded as~$t\to0$, but were seeking 
error bounds that are uniform in~$t$ using variable time steps. 
In the present work, we again consider a piecewise-constant DG scheme 
but with a completely different method of analysis that leads to 
sharp error bounds even for
non-smooth initial data, at the cost of requiring a constant time 
step~$\Dt$. Our previous analysis~\cite[Theorem~5]{McLeanMustapha2009} 
of the scheme~\eqref{eq: DG}, in conjunction with relevant
estimates~\cite{McLean2010} of the derivatives of~$u$, shows, in
the special case of uniform time steps, only the sub-optimal error
bound
\begin{equation}\label{eq: suboptimal}
\|U^n-u(t_n)\|\le C\Dt^{r\nu}\|A^ru_0\|
  \quad\text{for $0\le r<1/\nu$.}
\end{equation}
In our main result, we substantially improve on~\eqref{eq: suboptimal} 
by showing that
\begin{equation}\label{eq: main result}
\|U^n-u(t_n)\|\le Ct_n^{r\nu-1}\Dt\|A^ru_0\|
  \quad\text{for $0\le r\le\min(2,1/\nu)$.}
\end{equation}
Thus, for a general $u_0\in\Hilb$ the error is of
order~$t_n^{-1}\Dt$ at~$t=t_n$, so the method is first-order accurate
but the error bound includes a factor~$t_n^{-1}$ that grows if~$t_n$
approaches zero, until at~$t=t_1$ the bound is of 
order~$t_1^{-1}\Dt=1$.  However, if $1/2\le\nu<1$ and $u_0$ is 
smooth enough to belong to~$D(A^{1/\nu})$, the domain 
of~$A^{1/\nu}$, then the error is of order~$\Dt$, uniformly in~$t_n$. 
For~$0<\nu\le1/2$, no matter how smooth~$u_0$ 
a factor~$t_n^{2r-1}$ is present. To the best of our knowledge, only 
Cuesta et~al.~\cite{CuestaLubichPalencia2006} and McLean and 
Thom\'ee~\cite[Theorem~3.1]{McLeanThomee2010} have hitherto 
investigated the time discretization of~\eqref{eq: ivp} for the
interesting case when the initial data might not be regular, the 
former using a finite difference-convolution quadrature 
scheme and the latter a method based on numerical inversion of the 
Laplace transform.

In the present work, we do not discuss the spatial discretization 
of~\eqref{eq: ivp}.  By contrast, Jin, Lazarov and 
Zhou~\cite{JinLazarovZhou2013} applied a piecewise linear finite 
element method using a quasi-uniform partition of~$\Omega$ into 
elements with maximum diameter~$h$, but with no time discretization.
They worked with an equivalent formulation of the
fractional diffusion problem,
\begin{equation}\label{eq: Lazarov}
\partial_{t,\text{C}}^\nu u-\nabla^2u=0\quad
\text{for $x\in\Omega$ and $0<t\le T$,}
\end{equation}
where $\partial_{t,\text{C}}$ denotes the Caputo fractional 
derivative, and proved
\cite[Theorems 3.5~and 3.7]{JinLazarovZhou2013} that, for an
appropriate choice of~$u_h(0)$,
\[
\|u_h(t)-u(t)\|+h\|\nabla(u_h-u)\|\le Ct^{\nu(r-1)}\times
  \begin{cases}
  h^2\ell_h\|A^ru_0\|,&r\in\{0,1/2\},\\
  h^2\|A^ru_0\|,&r=1,
  \end{cases}
\]
where $\ell_h=\max(1,\log h^{-1})$.  These estimates for the spatial
error complement our bounds for the error in a time discretization.

For a fixed step size~$\Dt>0$, we put~$t_n=n\Dt$ and define a 
piecewise-constant approximation~$U(t)\approx u(t)$ by
applying the DG 
method~\cite{McLeanMustapha2009,McLeanThomeeWahlbin1996}, 
\begin{equation}\label{eq: DG}
U^n-U^{n-1}+\int_{t_{n-1}}^{t_n}\partial_t^{1-\nu}AU(t)\,dt=0
	\quad\text{for $n\ge1$, with $U^0=u_0$,}
\end{equation}
where $U^n=U(t_n^-)=\lim_{t\to t_n^-}U(t)$ denotes the
one-sided limit from below at the $n$th time level.  Thus,
$U(t)=U^n$ for~$t_{n-1}<t\le t_n$. Since we do not 
consider any spatial discretization, $U$ is a semidiscrete 
solution with values in~$\Hilb$. A short calculation reveals
that
\[
\int_{t_{n-1}}^{t_n}\partial_t^{1-\nu}AU(t)\,dt=\Dt^\nu\sum_{j=1}^n
	\beta_{n-j}AU^j,
\]
with
\[
\beta_0=\Dt^{-\nu}\int_{t_{n-1}}^{t_n}
	\frac{(t_n-t)^{\nu-1}}{\Gamma(\nu)}\,dt
	=\frac{1}{\Gamma(1+\nu)}
\]
and, for~$j\ge1$,
\[
\beta_j=\Dt^{-\nu}\int_{t_{n-j-1}}^{t_{n-j}}
	\frac{(t_n-t)^{\nu-1}-(t_{n-1}-t)^{\nu-1}}{\Gamma(\nu)}\,dt
	=\frac{(j+1)^\nu-2j^\nu+(j-1)^\nu}{\Gamma(1+\nu)}.
\]
Thus, by solving the recurrence relation
\begin{equation}\label{eq: recurrence}
(I+\beta_0\Dt^\nu A)U^n=U^{n-1}-\Dt^\nu\sum_{j=1}^{n-1}
	\beta_{n-j}AU^j
\end{equation}
for $n=1$, $2$, $3$, \dots we may compute $U^1$,
$U^2$, $U^3$, \dots.

In the classical limit as~$\nu\to1$, the fractional-order 
equation~\eqref{eq: ivp} reduces to an abstract heat equation,
\begin{equation}\label{eq: heat eqn}
\partial_tu+Au=0\quad\text{for $t>0$,}\quad\text{with $u(0)=u_0$,}
\end{equation}
and the time-stepping DG method~\eqref{eq: DG} reduces to the implicit
Euler scheme
\begin{equation}\label{eq: implicit Euler}
\frac{U^n-U^{n-1}}{\Dt}+AU^n=0,
\end{equation}
for which the following error bound 
holds~\cite[Theorems 7.1~and 7.2]{Thomee1997}:
\begin{equation}\label{eq: error heat}
\|U^n-u(t_n)\|\le Ct_n^{r-1}\Dt\|A^ru_0\|\quad
	\text{for $n=1$, $2$, $3$, \dots and $0\le r\le1$.}
\end{equation}
This result is just the limiting case 
as~$\nu\to1$ of our error estimate~\eqref{eq: main result} for the 
fractional diffusion equation.

For any real~$r\ge0$, we can characterize $D(A^r)$ in terms of the 
generalized Fourier coefficients in an eigenfunction expansion,
\[
v=\sum_{m=1}^\infty v_m\phi_m,\quad v_m=\iprod{v,\phi_m}.
\]
Indeed, $v\in\Hilb$ belongs to~$D(A^r)$ if and only if
\begin{equation}\label{eq: ||A^rv||}
\|A^rv\|^2=\sum_{m=1}^\infty\lambda_m^{2r}v_m^2<\infty,
\end{equation}
in which case the series $A^rv=\sum_{m=1}^\infty\lambda_m^rv_m\phi_m$
converges in~$\Hilb$.  Thus (recalling our assumption that 
$\lambda_m\to\infty$) 
the larger the value of~$r$ such that~$v\in D(A^r)$, the faster the 
Fourier coefficients~$v_m$ decay as~$m\to\infty$ and the ``smoother'' 
$v$ is.  When~$\Hilb=L_2(\Omega)$ the functions 
in~$D(A^r)$ may have to satisfy compatibility conditions 
on~$\partial\Omega$; see Thom\'ee~\cite[Lemma~3.1]{Thomee1997}
or \cite[Section~3]{McLean2010}. In particular, an infinitely 
differentiable function will be somewhat ``non-smooth'' if it fails to 
satisfy the boundary conditions of our problem.

We note that, for a given~$u_0$, the exact solution~$u$ is less 
smooth than is the case for the classical heat equation.  To see why, 
consider the Fourier expansion
\begin{equation}\label{eq: u(t) Fourier}
u(t)=\sum_{m=1}^\infty u_m(t)\phi_m,\qquad 
u_m(t)=\iprod{u(t),\phi_m},
\end{equation}
and put $u_{0m}=\iprod{u_0,\phi_m}$.
The Fourier coefficients~$u_m(t)$ satisfy the initial-value problem
\begin{equation}\label{eq: ivp um}
u_m'+\lambda_m\partial_t^{1-\nu}u_m=0,\quad
        \text{for~$t>0$, with $u_m(0)=u_{0m}$,}
\end{equation}
so that, as is well known~\cite{McLean2010},
$u_m(t)=E_\nu(-\lambda_m t^\nu)u_{0m}$ where $E_\nu$ denotes the 
Mittag--Leffler function. Since $E_\nu(-s)=O(s^{-1})$ decays slowly 
as~$s\to\infty$ for~$0<\nu<1$, in comparison to~$E_1(-s)=e^{-s}$, the 
high frequency modes of the solution are not damped as rapidly as in 
the classical case~$\nu=1$.

Section~\ref{sec: integral rep} uses Laplace transform techniques to 
derive integral representations for the Fourier coefficients 
$U^n_m=\iprod{U^n,\phi_m}$~and $u_m(t_n)=\iprod{u(t_n),\phi_m}$.  We 
show that $U^n_m-u_m(t_n)=\delta^n(\mu)u_{0m}$, where $\delta^n(\mu)$ 
is given by an explicit but complicated integral; thus,
the error has a Fourier expansion of the form
\begin{equation}\label{eq: Un-u(tn)}
U^n-u(t_n)=\sum_{m=1}^\infty\delta^n(\lambda_m\Dt^\nu)u_{0m}\phi_m,
\quad u_{0m}=\iprod{u_0,\phi_m}.
\end{equation}
Theorem~\ref{thm: delta^n(mu)} states a key estimate 
for~$\delta^n(\mu)$, but to avoid a lengthy digression the proof is 
relegated to Section~\ref{sec: technical}.

The main result~\eqref{eq: main result} of the paper is established 
in Section~\ref{sec: error}, where we first prove in 
Theorem~\ref{thm: error nonsmooth} that if~$u_0\in\Hilb$ then the
error is of order~$t_n^{-1}\Dt$, coinciding with the error
estimate~\eqref{eq: error heat} for the classical heat equation
when~$r=0$. Next we prove the special case~$r=\min(2,1/\nu)$ 
of \eqref{eq: main result} and then, in 
Theorem~\ref{thm: interp error}, deduce the general case by 
interpolation.
The paper concludes with Section~\ref{sec: numerical examples}, which 
presents the results of some computational experiments for a model 
1D~problem, as well as numerical evidence that the constant~$C$
in~\eqref{eq: main result} can be chosen independent of~$\nu$.
\section{Integral representations}\label{sec: integral rep}
Our error analysis relies on the Laplace transform
\[
\hat u(z)=\Lapl\{u(t)\}=\int_0^\infty e^{-zt}u(t)\,dt.
\]
A standard energy
argument~\cite{McLeanMustapha2009,McLeanThomeeWahlbin1996} shows that
$\|u(t)\|\le\|u_0\|$ so 
$\hat u(z)$ exists and is analytic in the right half-plane $\Re z>0$,
and since $\Lapl\{\partial_t^{1-\nu}u\}=z^{1-\nu}\hat u(z)$ 
and $\Lapl\{\partial_tu\}=z\hat u-u_0$, it
follows from~\eqref{eq: ivp um} that
$z\hat u_m+\lambda_m z^{1-\nu}\hat u_m=u_{0m}$, so
\[
\hat u_m(z)=\frac{u_{0m}}{z+\lambda_mz^{1-\nu}}.
\]
Thus, the Laplace inversion formula gives, for $n\ge1$~and any~$a>0$,
\[
u_m(t_n)=\frac{1}{2\pi i}\int_{a-i\infty}^{a+i\infty}e^{zt_n}
  \hat u_m(z)\,dz=\frac{u_{0m}}{2\pi i}\int_{a-i\infty}^{a+i\infty}
	\frac{e^{zt_n}}{1+\lambda_mz^{-\nu}}\,\frac{dz}{z},
\]
which, following a substitution, we may write as
\begin{equation}\label{eq: um(tn) a}
u_m(t_n)=\frac{u_{0m}}{2\pi i}\int_{a-i\infty}^{a+i\infty}
	\frac{e^{nz}}{1+\mu z^{-\nu}}\,\frac{dz}{z},
	\quad\text{where $\mu=\lambda_m\Dt^\nu$.}
\end{equation}
It follows using Jordan's lemma that
\begin{equation}\label{eq: um(tn)}
u_m(t_n)=\frac{u_{0m}}{2\pi i}\int_{-\infty}^{0^+}
	\frac{e^{nz}}{1+\mu z^{-\nu}}\,\frac{dz}{z}
	\quad\text{for $n\ge1$,}
\end{equation}
where the notation~$\int_{-\infty}^{0^+}$ indicates that the path of 
integration is a Hankel contour enclosing the negative real axis and
oriented counterclockwise.

Now consider the recurrence relation~\eqref{eq: recurrence} used
to compute the numerical solution.  The Fourier 
coefficients~$U^n_m=\iprod{U^n,\phi_m}$ satisfy
\begin{equation}\label{eq: Unm}
(1+\beta_0\Dt^\nu\lambda_m)U^n_m=U^{n-1}_m
	-\lambda_m\Dt^\nu\sum_{j=1}^{n-1}\beta_{n-j}U^j_m,
\end{equation}
and to obtain an integral representation of~$U^n_m$ analogous
to~\eqref{eq: um(tn)} we introduce the discrete-time Laplace transform
\begin{equation}\label{eq: discrete LT}
\Utilde(z)=\sum_{n=0}^\infty U^ne^{-nz}.
\end{equation}
Again, a standard energy argument shows that~$\|U^n\|\le\|u_0\|$ so
this series converges in the right half-plane~$\Re z>0$.
Multiplying \eqref{eq: Unm} by~$e^{-nz}$, summing over~$n$ and using 
the fact that the sum in~\eqref{eq: Unm} is a discrete convolution, 
we find that
\[
\bigl[1-e^{-z}+\mu\betatilde(z)\bigr]\Utilde_m(z)
	=\bigl[1+\mu\betatilde(z)\bigr]u_{0m},
\]
again with $\mu=\lambda_m\Dt^\nu$.
So, letting $\psi(z)=\betatilde(z)/(1-e^{-z})$,
\begin{equation}\label{eq: Utilde betatilde}
\Utilde_m(z)
	=u_{0m}\,\frac{1+\mu\betatilde(z)}{1-e^{-z}+\mu\betatilde(z)}
	=u_{0m}\,\frac{(1-e^{-z})^{-1}+\mu\psi(z)}{1+\mu\psi(z)}.
\end{equation}
For our subsequent analysis we now establish key properties of the 
function~$\psi(z)$.

Following appropriate shifts of the summation index, one finds that
\begin{equation}\label{eq: betatilde}
\betatilde(z)=\sum_{n=0}^\infty\beta_ne^{-nz}
	=(e^z-1)(1-e^{-z})\,\frac{\Li_{-\nu}(e^{-z})}{\Gamma(1+\nu)},
\end{equation}
where the polylogarithm~\cite{Lewin1981,Wood1992} is defined by
$\Li_p(z)=\sum_{n=1}^\infty z^n/n^p$ for $|z|<1$ and $p\in\C$;
thus,
\begin{equation}\label{eq: psi Li}
\psi(z)=(e^z-1)\,\frac{\Li_{-\nu}(e^{-z})}{\Gamma(1+\nu)}
  =\frac{1}{\Gamma(1+\nu)}\biggl(1+\sum_{n=1}^\infty
    \bigl[(n+1)^\nu-n^\nu\bigr]e^{-nz}\biggr).
\end{equation}
From the identity
\[
\frac{1}{n^p}=\frac{\Gamma(1-p)}{2\pi i}\int_{-\infty}^{0^+}
	e^{nw}w^{p-1}\,dw,
\]
we find, after interchanging the sum and integral, that
\begin{equation}\label{eq: Li Hankel}
\Li_p(e^{-z})=\frac{\Gamma(1-p)}{2\pi i}\int_{-\infty}^{0^+}
	\frac{w^{p-1}\,dw}{e^{z-w}-1}
\end{equation}
for $\Re z$ sufficiently large.  Thus, $\Li_p(e^{-z})$
possesses an analytic continuation to the strip~$-2\pi<\Im z<2\pi$
with a cut along the negative real axis~$(-\infty,0]$.  It follows
that $\psi(z)$ is analytic for~$z$ in the same cut strip, and
moreover
\begin{equation}\label{eq: bar psi}
\overline{\psi(z)}=\psi(\bar z)
\quad\text{and}\quad
\psi(z+2\pi i)=\psi(z).
\end{equation}

\begin{lemma}\label{lem: 1+mu psi}
If $|\Im z|\le\pi$ and $z\notin(-\infty,0]$, then
\begin{equation}\label{eq: psi integral}
\psi(z)=\frac{\sin\pi\nu}{\pi}\int_0^\infty
  \frac{s^{-\nu}}{1-e^{-z-s}}\,\frac{1-e^{-s}}{s}\,ds
\end{equation}
and $1+\mu\psi(z)\ne0$ for $0<\mu<\infty$.
\end{lemma}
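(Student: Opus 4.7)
The plan is to establish the integral formula by reducing $\Li_{-\nu}(e^{-z})$ to the convergent Bose--Einstein integral for $\Li_{1-\nu}(e^{-z})$, which is available because $0<1-\nu<1$, and then to deduce $1+\mu\psi(z)\ne 0$ from a sign analysis of the resulting integrand.

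For the representation itself, I would start from the identity $\Li_{-\nu}(e^{-z})=-\tfrac{d}{dz}\Li_{1-\nu}(e^{-z})$ combined with the standard Bose--Einstein formula
\[
\Li_{1-\nu}(e^{-z})=\frac{1}{\Gamma(1-\nu)}\int_0^\infty \frac{s^{-\nu}}{e^{z+s}-1}\,ds,\qquad \Re z>0.
\]
Differentiating under the integral and observing that $\partial_z$ and $\partial_s$ act identically on $1/(e^{z+s}-1)$, I would integrate by parts in $s$ with $u=s^{-\nu}$. The main obstacle is the endpoint at $s=0$: the naive boundary term $s^{-\nu}/(e^{z+s}-1)\bigr|_{s=0}$ blows up because $\nu>0$. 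The remedy is to first subtract the $s$-independent quantity $1/(e^z-1)$, which does not alter the $s$-derivative but makes the bracketed expression vanish like $s$ as $s\to 0^{+}$, killing the boundary term at both endpoints. This yields
\[
\Li_{-\nu}(e^{-z})=-\frac{\nu}{\Gamma(1-\nu)}\int_0^\infty s^{-\nu-1}\left[\frac{1}{e^{z+s}-1}-\frac{1}{e^z-1}\right]ds.
\]
Multiplying by $(e^z-1)/\Gamma(1+\nu)$, using the reflection-formula identity $\nu/[\Gamma(1-\nu)\Gamma(1+\nu)]=\sin(\pi\nu)/\pi$, and invoking the elementary simplification
\[
-(e^z-1)\left[\frac{1}{e^{z+s}-1}-\frac{1}{e^z-1}\right]=\frac{1-e^{-s}}{1-e^{-z-s}}
\]
delivers the stated formula for $\Re z>0$. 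Extension to the full cut strip is then immediate by analytic continuation, since the only zero of $1-e^{-z-s}$ with $s>0$ and $|\Im z|\le\pi$ forces $\Im z=0$ and $\Re z=-s<0$, placing $z$ on the excluded cut.

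For the non-vanishing of $1+\mu\psi(z)$ with $\mu>0$, I would prove the stronger claim that $\psi(z)\notin(-\infty,0]$ throughout the domain. Writing $z=x+iy$, a direct computation gives $\Im[1/(1-e^{-z-s})]=-e^{-x-s}\sin y/|1-e^{-z-s}|^2$, and every other factor in the integrand of \eqref{eq: psi integral} is positive, so $\Im\psi(z)$ has the sign of $-\sin y\sin(\pi\nu)$ and is therefore nonzero whenever $0<|y|<\pi$. On the boundary lines $y=\pm\pi$ one has $1-e^{-z-s}=1+e^{-x-s}>0$, making the integrand real and positive, so $\psi(z)>0$; the same positivity is visible directly when $y=0$ and $x>0$. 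These three cases exhaust the domain and rule out $\psi(z)=-1/\mu$ for any $\mu>0$, completing the proof.
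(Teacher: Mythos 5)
Your proof is correct, and in both halves it takes a genuinely different route from the paper's. For the integral formula, the paper starts from the Hankel-contour representation of $\Li_{-\nu}(e^{-z})$, uses the identity $\frac{e^z-1}{e^{z-w}-1}=1+\frac{e^w-1}{1-e^{w-z}}$ together with $\int_{-\infty}^{0^+}w^{-\nu-1}\,dw=0$ to tame the $w=0$ singularity, and then collapses the contour onto the two sides of the cut; your Bose--Einstein integral for $\Li_{1-\nu}$ followed by integration by parts with the subtraction of $1/(e^z-1)$ is the exact analogue of that regularization, but avoids the Hankel machinery at the cost of a final analytic-continuation step from $\Re z>0$ to the cut strip (which is fine: the integral is manifestly analytic wherever $1-e^{-z-s}\ne0$, and your check that this excludes only the cut when $|\Im z|\le\pi$ is correct). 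The bigger divergence is in the nonvanishing of $1+\mu\psi$: the paper proves $\psi(z)\notin(-\infty,0]$ by a rather heavy route --- convexity of the coefficients $e^{-xn}[(n+1)^\nu-n^\nu]$ and Zygmund's results on trigonometric series to control $\Re\psi$ and $\Im\psi$ for $x\ge0$, Wood's jump formula for the polylogarithm on the cut, and then the strong maximum principle to propagate $\Im\psi\ne0$ to $x<0$ --- whereas you simply read $\operatorname{sgn}\Im\psi(x+iy)=-\operatorname{sgn}(\sin y)$ directly off the integrand of \eqref{eq: psi integral}, for all real $x$ at once, and dispose of the real boundary cases $y=0,\,x>0$ and $y=\pm\pi$ by positivity of the integrand. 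Your argument is shorter, self-contained, and gives the sign of $\Im\psi$ on the whole cut strip in one stroke; what it does not reproduce is the quantitative lower bound $\Re\psi(x+iy)\ge\frac{1}{2\Gamma(1+\nu)}$ for $x\ge0$ that falls out of the paper's series argument, but that bound is not needed for the lemma's conclusion.
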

\begin{proof}
Given~$z\notin(-\infty,0]$, we can choose a Hankel contour that does 
not enclose~$z$, and the formulae \eqref{eq: psi Li}~and \eqref{eq: Li 
Hankel} then imply that
\[
\psi(z)=\frac{e^z-1}{2\pi i}\int_{-\infty}^{0^+}
  \frac{w^{-\nu-1}\,dw}{e^{z-w}-1}.
\]
Since
\[
\frac{e^z-1}{e^{z-w}-1}=1+\frac{e^w-1}{1-e^{w-z}}
\quad\text{and}\quad
\int_{-\infty}^{0^+}w^{-\nu-1}\,dw=0,
\]
we have
\[
\psi(z)=\frac{1}{2\pi i}\int_{-\infty}^{0^+}
  \frac{w^{-\nu}}{1-e^{w-z}}\,\frac{e^w-1}{w}\,dw.
\]
Define contours along either side of the cut,
\begin{equation}\label{eq: Contour pm}
\Contour_\pm=\{\,se^{\pm i\pi}:\text{for $0<s<\infty$}\,\},
\end{equation}
so that $\arg(w)=\pm\pi$ if~$w\in\Contour_\pm$.  Noting that the 
integrand is $O(w^{-\nu})$ as~$w\to0$, we may collapse the Hankel 
contour into~$\Contour^+-\Contour^-$ to 
obtain~\eqref{eq: psi integral}.

The second part of the lemma amounts to showing 
that~$\psi(z)\notin(-\infty,0]$.
If $x\ge0$ and
$\alpha_n=e^{-xn}\bigl[(n+1)^\nu-n^\nu\bigr]$, then
\begin{equation}\label{eq: Re psi}
\psi(x+iy)=\frac{1}{\Gamma(1+\nu)}\biggl(1+\sum_{n=1}^\infty
  \alpha_n\cos ny-i\sum_{n=1}^\infty\alpha_n\sin ny\biggr).
\end{equation}
The sequence~$\alpha_n$ is
convex and tends to zero, so~\cite[pp.~183 and 228]{Zygmund1959}
\[
\Re\psi(x+iy)\ge\frac{1}{2\Gamma(1+\nu)}
\quad\text{and}\quad
\Im\psi(x+iy)<0
\quad\text{for $x\ge0$ and $0<y<\pi$,}
\]
and using \eqref{eq: bar psi} we find that
$\Im\psi(x\pm i\pi)=0$ for $-\infty<x<\infty$.
The polylogarithm satisfies~\cite[Equation~(3.1)]{Wood1992}
\[
\Im\Li_p(e^{-z})=\mp\frac{\pi s^{p-1}}{\Gamma(p)}
	\quad\text{if $z=se^{\pm i\pi}$ for $0<s<\infty$,}
\]
so, using the 
identity~$\Gamma(1+\nu)\Gamma(1-\nu)=\pi\nu/\sin\pi\nu$,
\begin{equation}\label{eq: Im psi cut}
\Im\psi(se^{\pm i\pi})=\mp(1-e^{-s})s^{-\nu-1}\sin\pi\nu,
\end{equation}
and in particular 
$\Im\psi(x+i0)<0$ but $\Im\psi(x-i0)>0$ for $-\infty<x<0$, whereas
$\Im\psi(x)=0$ for $0<x<\infty$.
Applying the strong maximum principle for harmonic functions,
we conclude that $\Im\psi(x+iy)\ne0$ if $0<|y|<\pi$.
We saw above that $\Re\psi(x+iy)>0$ if~$x\ge 0$, and 
by~\eqref{eq: psi integral},
\[
\psi(x\pm i\pi)=\frac{\sin\pi\nu}{\pi}\int_0^\infty 
  \,\frac{s^{-\nu}}{1+e^{-x-s}}\,\frac{1-e^{-s}}{s}\,ds>0
\]
for all real~$x$, which completes the proof.
\end{proof}

Since
\[
\frac{1}{2\pi i}\int_{a-i\pi}^{a+i\pi}e^{(n-j)z}\,dz
	=\delta_{nj}=\begin{cases}
	1,&\text{if $n=j$,}\\
	0,&\text{if $n\ne j$,}
\end{cases}
\]
we see from the definition~\eqref{eq: discrete LT} of~$\Utilde_m$,
after interchanging the sum and integral, that for any $a>0$,
\begin{equation}\label{eq: Unm integral}
U^n_m=\frac{1}{2\pi i}\int_{a-i\pi}^{a+i\pi}e^{nz}
	\Utilde_m(z)\,dz.
\end{equation}
Moreover, since
\[
\frac{(1-e^{-z})^{-1}+\mu\psi(z)}{1+\mu\psi(z)}
  =1+\frac{(1-e^{-z})^{-1}-1}{1+\mu\psi(z)}
  =1-\frac{1/(1-e^z)}{1+\mu\psi(z)},
\]
the formula~\eqref{eq: Utilde betatilde} for~$\Utilde_m(z)$ implies
that 
\begin{equation}\label{eq: Unm basic integral}
U^n_m=\frac{u_{0m}}{2\pi i}\int_{a-i\pi}^{a+i\pi}
  \frac{e^{nz}}{1+\mu\psi(z)}\,\frac{dz}{e^z-1}
\quad\text{for~$n\ge1$.}
\end{equation}
The next lemma describes the asymptotic behaviour of~$\psi$, and 
shows in particular that the integrands of \eqref{eq: um(tn) a}~and
\eqref{eq: Unm basic integral} are close for~$z$ near~$0$.
In~\eqref{eq: psi z->0}, $\zeta$ denotes the 
Riemann zeta function.

\begin{lemma}\label{lem: psi asymp}
The function~\eqref{eq: psi Li} satisfies
\begin{equation}\label{eq: psi z->0}
\psi(z)=z^{-\nu}+\tfrac12 z^{1-\nu}
  +\frac{\zeta(-\nu)}{\Gamma(1+\nu)}\,z+O(z^{2-\nu})
	\quad\text{as $z\to0$,}
\end{equation}
and
\begin{equation}\label{eq: psi Re z->-oo}
\psi(z)
	=\frac{\sin\pi\nu}{\pi\nu}\,(i\pi-z)^{-\nu}+O(z^{-\nu-1})
	\quad\text{as $\Re(z)\to-\infty$, with $0<\Im z<\pi$.}
\end{equation}
\end{lemma}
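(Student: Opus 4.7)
The plan for Part~(i), the small-$z$ expansion, is to combine the Taylor series of $e^z-1$ with the classical Jonqui\`ere--Wood expansion
\[
\Li_{-\nu}(e^{-z}) = \Gamma(1+\nu)\,z^{-\nu-1} + \sum_{k=0}^\infty \frac{\zeta(-\nu-k)}{k!}(-z)^k,
\]
valid for $|z|<2\pi$, $z\notin(-\infty,0]$, which may be obtained from the Hankel representation~\eqref{eq: Li Hankel} by shrinking the contour onto the origin and summing residues of $(e^{z-w}-1)^{-1}$. Substituting into $\psi(z)=(e^z-1)\Li_{-\nu}(e^{-z})/\Gamma(1+\nu)$ and multiplying out produces one family of terms $z^{n-\nu-1}/n!$ with $n\ge1$, together with integer-power terms coming from the sum; for $0<\nu<1$ these interleave as $z^{-\nu},z^{1-\nu},z,z^{2-\nu},z^2,\dots$, and isolating the first three yields~\eqref{eq: psi z->0} with a remainder of order $z^{2-\nu}$.

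For Part~(ii) I would work from the integral formula~\eqref{eq: psi integral} established in Lemma~\ref{lem: 1+mu psi}. Setting $R=-\Re z$ and rescaling $s=R\sigma$ gives
\[
\psi(z)=\frac{\sin\pi\nu}{\pi}\,R^{-\nu}\int_0^\infty\frac{\sigma^{-\nu-1}(1-e^{-R\sigma})}{1-e^{R(1-\sigma)-i\Im z}}\,d\sigma.
\]
As $R\to\infty$ the rational factor converges pointwise to $\mathbf{1}_{\{\sigma>1\}}$, with exponential decay uniform on $\sigma$-intervals bounded away from $1$; dominated convergence plus an explicit estimate of the boundary layer (of width $O(1/R)$ near $\sigma=1$) yields $\psi(z)=(\sin\pi\nu/(\pi\nu))R^{-\nu}+O(R^{-\nu-1})$. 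Since a direct Taylor expansion gives $(i\pi-z)^{-\nu}=R^{-\nu}+O(R^{-\nu-1})$, this is equivalent to the claim~\eqref{eq: psi Re z->-oo}. A more structural variant is to perform the same analysis on the Hankel-contour form of $\psi$ used in the proof of Lemma~\ref{lem: 1+mu psi}: after the substitution $w\mapsto z-u$ the branch cut of $(z-u)^{-\nu-1}$ sits in the $u$-plane in a position that, combined with the Laurent tail $1/(e^u-1)=1/u-1/2+O(u)$ at $u=0$, produces precisely the factor $(i\pi-z)^{-\nu}$, the $i\pi$ arising from the branch choice compatible with $0<\Im z<\pi$.

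The main technical hurdle lies in Part~(ii), specifically in identifying the correct leading-order \emph{phase} $(i\pi-z)^{-\nu}$ rather than merely its magnitude $R^{-\nu}$; this demands careful tracking of the branch of a complex power, either during the contour deformation or in the boundary-layer analysis near $\sigma=1$. Once the leading term is pinned down, the bound $O(z^{-\nu-1})$ on the remainder is routine from the geometric decay of $(e^{R(1-\sigma)-i\Im z}-1)^{-1}$ away from the transition. Part~(i), by contrast, is essentially algebraic bookkeeping once the Jonqui\`ere--Wood expansion is in hand.
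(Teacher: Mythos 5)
Your Part~(i) is essentially the paper's own argument: the expansion you quote for $\Li_{-\nu}(e^{-z})$ is precisely the one the paper imports from Flajolet's Theorem~1 (equation~\eqref{eq: Lip z->0} with $p=-\nu$), and the remaining step — multiplying by $e^z-1=z+\tfrac12z^2+O(z^3)$ and collecting the terms $z^{-\nu},z^{1-\nu},z$ — is identical. (Your sketch of how to derive that expansion from \eqref{eq: Li Hankel} is looser than it looks: the $\Gamma(1+\nu)z^{-\nu-1}$ term comes from the pole of $(e^{z-w}-1)^{-1}$ at $w=z$ via Hankel's formula, while the $\zeta(-\nu-k)$ terms need the Laurent expansion of $(e^{z-w}-1)^{-1}$ about $w=z$ together with $\zeta(q)=\Li_q(1)$; but the expansion is classical and correctly quoted, so this is attribution, not a gap.) Part~(ii) is where you genuinely depart from the paper, which simply cites Ford's asymptotic $\Li_p(e^{-z})=-(i\pi-z)^p/\Gamma(1+p)+O(z^{p-1})$ and applies the reflection formula $\Gamma(1+\nu)\Gamma(1-\nu)=\pi\nu/\sin\pi\nu$. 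Your Laplace-type rescaling of \eqref{eq: psi integral} is more self-contained, and your remark that $(i\pi-z)^{-\nu}=R^{-\nu}+O(R^{-\nu-1})$ uniformly in the strip correctly shows the "phase" is immaterial at this order. The price you pay is uniformity near the cut: the denominator $1-e^{R(1-\sigma)-i\Im z}$ in your boundary layer is only bounded below by roughly $\sin(\Im z/2)$, so your $O(R^{-\nu-1})$ constant degrades as $\Im z\to0^+$, and the representation \eqref{eq: psi integral} is not available on the cut itself. Since the paper's substantive use of \eqref{eq: psi Re z->-oo} is through the boundary values $\psi_\pm(s)=\psi(se^{\pm i\pi})$ in \eqref{eq: psi s->oo}, your route would need a supplementary principal-value/residue argument at $\Im z=0^\pm$ (or a short deformation exploiting \eqref{eq: bar psi} and periodicity) to deliver what is actually consumed downstream; Ford's expansion, being valid up to the cut, gives this for free.
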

\begin{proof}
Flajolet~\cite[Theorem~1]{Flajolet1999} shows that
\begin{equation}\label{eq: Lip z->0}
\Li_p(e^{-z})\sim\Gamma(1-p)z^{p-1}+\sum_{k=0}^\infty
	(-1)^k\zeta(p-k)\,\frac{z^k}{k!}\quad\text{as $z\to0$,}
\end{equation}
and \eqref{eq: psi z->0} follows because $e^z-1=z+\tfrac12 
z^2+O(z^3)$ as~$z\to0$.
The results of Ford~\cite[Equation~(17), p.~226]{Ford1960} imply that
\begin{equation}\label{eq: Li_p z->-oo}
\Li_p(e^{-z})=-\frac{(i\pi-z)^p}{\Gamma(1+p)}+O(z^{p-1})
	\quad\text{as $\Re z\to-\infty$,}
\end{equation}
(see also Wood~\cite[Equation~(11.2)]{Wood1992}) which, in 
combination with the 
identity $\Gamma(1+\nu)\Gamma(1-\nu)=\pi\nu/\sin\pi\nu$, implies
\eqref{eq: psi Re z->-oo}.
\end{proof}

\begin{figure}
\begin{center}
\includegraphics[scale=0.5,trim=0 80 0 50]{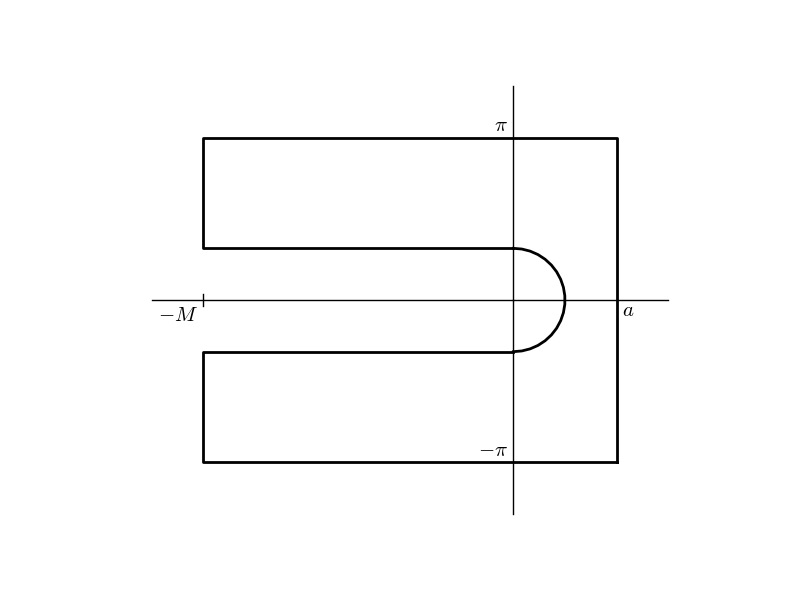}
\caption{The integration contour~$\Contour(a,M)$.}
\label{fig: contour}
\end{center}
\end{figure}

The formula for~$U^n_m$ in the next theorem matches \eqref{eq: um(tn)} 
for~$u_m(t_n)$.

\begin{theorem}\label{thm: Unm Hankel}
The solution of~\eqref{eq: Unm} admits the integral representation
\begin{equation}\label{eq: Unm psi}
U^n_m=\frac{u_{0m}}{2\pi i}\int_{-\infty}^{0^+}
  \frac{e^{nz}}{1+\mu\psi(z)}\,\frac{dz}{e^z-1}\quad
  \text{for $n\ge1$,}
\end{equation}
where the Hankel contour remains inside the strip~$-\pi<\Im z<\pi$.
\end{theorem}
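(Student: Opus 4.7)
The strategy would be to apply Cauchy's theorem to deform the vertical-line contour in \eqref{eq: Unm basic integral} into the Hankel contour of \eqref{eq: Unm psi}. I would take the closed, counterclockwise keyhole contour $\Contour(a,M)$ of Figure~\ref{fig: contour}, consisting of: (i) the original vertical segment from $a-i\pi$ to $a+i\pi$; (ii) horizontal segments along $\Im z=\pm\pi$ joining it to the vertical line $\Re z=-M$; (iii) the two halves of that left vertical line, one above and one below the cut; and (iv) a Hankel-type loop running along the top and bottom of the cut $(-M,0]$ and joined by a small circle around the origin.

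Next I would verify that the integrand
\[
F(z)=\frac{e^{nz}}{(1+\mu\psi(z))(e^z-1)}
\]
is analytic inside $\Contour(a,M)$. The function $\psi$ is analytic in the cut strip and $1+\mu\psi(z)\neq 0$ by Lemma~\ref{lem: 1+mu psi}; the apparent pole of $1/(e^z-1)$ at~$z=0$ is tamed by the leading term $\psi(z)\sim z^{-\nu}$ of Lemma~\ref{lem: psi asymp}, which makes $1/(1+\mu\psi(z))=O(z^\nu)$ and hence $F(z)=O(z^{\nu-1})$, an integrable singularity that the small circle can be shrunk onto in the limit. Cauchy's theorem then yields $\oint_{\Contour(a,M)}F(z)\,dz=0$.

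The work is then to show that most of the pieces contribute nothing. For the horizontal segments at $\Im z=\pm\pi$, the periodicity $\psi(z+2\pi i)=\psi(z)$ from \eqref{eq: bar psi}, together with $e^{n(x+i\pi)}=(-1)^ne^{nx}=e^{n(x-i\pi)}$ and $e^{x\pm i\pi}-1=-e^x-1$, gives $F(x+i\pi)=F(x-i\pi)$; since these segments are traversed in opposite directions, their contributions cancel exactly. For the two halves of the left vertical line, the asymptotic \eqref{eq: psi Re z->-oo} forces $\psi(z)\to 0$, so $1/(1+\mu\psi(z))\to 1$ and $1/(e^z-1)\to -1$, while $|e^{nz}|=e^{-nM}$; the combined contribution is $O(e^{-nM})$ on a segment of bounded length and so tends to~$0$ as $M\to\infty$.

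What remains, after these cancellations, is the identity that the integral over the right vertical segment equals (minus) the integral over the keyhole loop around the cut; taking $M\to\infty$ collapses this loop to the prescribed counterclockwise Hankel contour, and multiplying by $u_{0m}/(2\pi i)$ in \eqref{eq: Unm basic integral} yields \eqref{eq: Unm psi}. The subtle step I expect to be the main obstacle is orientation bookkeeping for the keyhole pieces — making sure the sign one obtains matches the counterclockwise Hankel convention of the target formula — and confirming that the $z^{\nu-1}$ branch-type behavior near the origin really is harmless when collapsing the small circle.
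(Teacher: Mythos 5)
Your proposal is correct and follows essentially the same route as the paper: apply Cauchy's theorem to the keyhole contour $\Contour(a,M)$ of Figure~\ref{fig: contour}, use Lemma~\ref{lem: 1+mu psi} for analyticity, cancel the segments on $\Im z=\pm\pi$ via the periodicity in \eqref{eq: bar psi}, and kill the $\Re z=-M$ pieces with the $O(e^{-nM})$ bound coming from \eqref{eq: psi Re z->-oo}. Your extra observation that the integrand is $O(z^{\nu-1})$ near the origin is sound and is exactly the point the paper invokes later when collapsing the Hankel contour onto the cut.
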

\begin{proof}
By Lemma~\ref{lem: 1+mu psi},
the integrand from~\eqref{eq: Unm basic integral} is
analytic for~$z$ inside the contour~$\Contour(a,M)$ shown in
Figure~\ref{fig: contour}.  The contributions along $\Im z=\pm\pi$
cancel in view of the second part of~\eqref{eq: bar psi}.
Using~\eqref{eq: psi Re z->-oo}, if $\Re z\to-\infty$ then
\[
\frac{1/(e^z-1)}{1+\mu\psi(z)}\\
  \sim-\biggl(1+\mu\,\frac{\sin\pi\nu}{\pi\nu}\,(i\pi-z)^{-\nu}
  \biggr)^{-1}
\sim-1+\mu\,\frac{\sin\pi\nu}{\pi\nu}\, (i\pi-z)^{-\nu},
\]
so the contributions along~$\Re z=-M$ are $O(e^{-nM})$
as~$M\to\infty$, implying the desired formula for~$U^n_m$. 
\end{proof}

Together, \eqref{eq: um(tn)}~and \eqref{eq: Unm psi} imply that
the error formula~\eqref{eq: Un-u(tn)} holds, with
\begin{equation}\label{eq: delta}
\delta^n(\mu)=\frac{1}{2\pi i}\int_{-\infty}^{0^+}e^{nz}
	\biggl(\frac{1}{1+\mu\psi(z)}\,\frac{z}{e^z-1}
	-\frac{1}{1+\mu z^{-\nu}}
	\biggr)\frac{dz}{z}
\end{equation}
for~$0<\mu<\infty$, and with~$\delta^n(0)=0$ because if~$\lambda_m=0$ 
then $u_m(t_n)=u_{0m}=U^n_m$ for all~$n$.
The following estimate for~$\delta^n(\mu)$ is the key to proving 
our error estimates, but the lengthy proof is deferred until
Section~\ref{sec: technical}.

\begin{theorem}\label{thm: delta^n(mu)}
Let $0<\nu<1$. The sequence~\eqref{eq: delta} satisfies
\[
|\delta^n(\mu)|
  \le Cn^{-1}\min\bigl((\mu n^\nu)^2,(\mu n^\nu)^{-1}\bigr)
  \quad\text{for $n=1$, $2$, $3$, \dots and $0<\mu<\infty$.}
\]
\end{theorem}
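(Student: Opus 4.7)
The plan is to split the integrand in~\eqref{eq: delta}, deform the Hankel contour, and estimate each piece separately in the two regimes $\rho \le 1$ and $\rho \ge 1$, where throughout I write $\rho = \mu n^\nu$. First I would decompose
\[
\frac{z/(e^z-1)}{1+\mu\psi(z)} - \frac{1}{1+\mu z^{-\nu}} = A_\mu(z) + B_\mu(z),
\]
with $A_\mu(z) = \bigl(z/(e^z-1)-1\bigr)/(1+\mu\psi(z))$, which accounts for replacing $z/(e^z-1)$ by $1$, and $B_\mu(z) = \mu\bigl(z^{-\nu}-\psi(z)\bigr)/\bigl[(1+\mu\psi(z))(1+\mu z^{-\nu})\bigr]$, which accounts for replacing $\psi(z)$ by $z^{-\nu}$. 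Using Lemma~\ref{lem: psi asymp} together with the elementary expansion $z/(e^z-1)-1 = -z/2 + O(z^2)$, one has $A_\mu(z) = O(z)/(1+\mu\psi(z))$ and $B_\mu(z) = O(\mu z^{1-\nu})/\bigl[(1+\mu\psi(z))(1+\mu z^{-\nu})\bigr]$ near the origin.

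Next I would deform the Hankel contour in~\eqref{eq: delta} to $\Contour_n = \Contour_n^\circ \cup \Contour_n^+ \cup \Contour_n^-$, where $\Contour_n^\circ = \{n^{-1}e^{i\theta} : -\pi < \theta < \pi\}$ is a circle of radius $1/n$ and $\Contour_n^\pm = \{se^{\pm i\pi} : s \ge 1/n\}$ run along either side of the cut. This deformation is valid because $1 + \mu\psi(z)$ is non-zero and the integrand is analytic in the cut strip $|\Im z| < \pi$ by Lemma~\ref{lem: 1+mu psi}. A critical technical ingredient is a uniform lower bound
\[
|1 + \mu\psi(z)| \ge c\bigl(1 + \mu|\psi(z)|\bigr)
\quad\text{for $z \in \Contour_n$,}
\]
and likewise for $1 + \mu z^{-\nu}$; on the rays this follows from~\eqref{eq: Im psi cut} (which gives a quantitative lower bound on $|\Im\psi(se^{\pm i\pi})|$), and on the circle it follows from the expansion $\psi(z) = z^{-\nu}+O(z^{1-\nu})$, whose leading term has argument in $(-\pi\nu,\pi\nu)$ rather than at $-1$.

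In the regime $\rho \ge 1$ the bounds are direct. On $\Contour_n^\circ$ one has $|\mu\psi(z)| \gtrsim \rho$ and $|\mu z^{-\nu}| \gtrsim \rho$, so the denominator bound yields $|A_\mu(z) + B_\mu(z)| \lesssim |z|\rho^{-1} + \mu|z|^{1-\nu}\rho^{-2} \lesssim n^{-1}\rho^{-1}$; combined with $|e^{nz}/z| \lesssim n$ and arclength $\sim 1/n$, the circle contributes $O(n^{-1}\rho^{-1})$. On the rays, $|e^{nz}| = e^{-ns}$ gives exponential decay after $s \gtrsim 1/n$ and the same bound results. In the regime $\rho \le 1$ one must instead extract a factor $\rho^2$, and here cancellation between $A_\mu$ and $B_\mu$ becomes essential: expanding the denominators as $1/(1+\mu\psi) = 1 - \mu\psi + \mu^2\psi^2/(1+\mu\psi)$ and likewise for $z^{-\nu}$, the $O(1)$ and $O(\mu)$ contributions that survive the cancellation between $A_\mu$ and $B_\mu$ turn out to consist of entire powers of $z$ times $e^{nz}$ (so their Hankel integrals vanish by Cauchy's theorem), while the remaining terms are formally $O(\mu^2 z^{1-2\nu})$ and integrate to $O(\mu^2 n^{2\nu-1}) = O(n^{-1}\rho^2)$.

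The main obstacle is the regime $\rho \le 1$: identifying the precise leading-order cancellations between $A_\mu$ and $B_\mu$ up to second order in $\mu$, and then bounding the remainder uniformly in $n$, $\mu$ and $\nu$, relies on the polynomial parts of the Taylor expansion of $\psi$ integrating to zero around the Hankel contour, while the non-analytic pieces $z^{k-\nu}$ and $z^{k-2\nu}$ do not. Establishing the uniform denominator estimate on $\Contour_n^\circ$ is also delicate because one must treat sectors of $\arg z$ separately so as not to lose the factor $1+\mu|\psi(z)|$ in the lower bound.
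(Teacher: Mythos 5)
Your contour ($|z|=n^{-1}$ plus rays along the cut) is genuinely different from the paper's, which collapses the Hankel contour entirely onto $\Contour_+-\Contour_-$ and works with the real integral~\eqref{eq: delta cut} via a difference-of-squares identity; for $1/2<\nu<1$ the paper then needs the exact residue cancellation of Lemma~\ref{lem: integral=0}, because on the cut the region $s\lesssim\mu^{1/\nu}$ produces a contribution of size $n^{-1}\rho^{1/\nu}$, which exceeds $n^{-1}\rho^2$ when $1/\nu<2$. Your contour stays at distance $n^{-1}\ge\mu^{1/\nu}$ from the origin when $\rho\le1$, so $|\mu\psi(z)|\lesssim\rho\le1$ everywhere on it and the finite expansion $1/(1+\mu\psi)=1-\mu\psi+\mu^2\psi^2/(1+\mu\psi)$ is controlled; the second-order remainder, after the cancellation of the $z^{-2\nu}$ singularities, is $O(z^{1-2\nu})+O(\mu z^{1-3\nu})$ and integrates to $O(n^{-1}\rho^2)$ with constants uniform in $\nu$ because the contour never enters $|z|<n^{-1}$. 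If completed, this would bypass Lemma~\ref{lem: integral=0} altogether.

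The genuine gap is your justification for discarding the $O(1)$ and $O(\mu)$ terms. They do \emph{not} ``consist of entire powers of $z$.'' The zeroth-order term is $e^{nz}\bigl(z/(e^z-1)-1\bigr)/z$, which is fine (analytic in the cut strip, so its Hankel integral vanishes). But the first-order term, after the cancellation you invoke between $A_\mu$ and $B_\mu$, is
\[
-\mu\,e^{nz}\Bigl(\frac{\Li_{-\nu}(e^{-z})}{\Gamma(1+\nu)}-z^{-\nu-1}\Bigr),
\]
since $\psi(z)\,z/(e^z-1)=z\Li_{-\nu}(e^{-z})/\Gamma(1+\nu)$. This is a transcendental function with two separately branched pieces, and its vanishing Hankel integral is exactly the crux: a mere size estimate gives $O(\mu z^{1-\nu})$ near the origin and hence only $O(\mu n^{\nu-1})=O(n^{-1}\rho)$, which is \emph{weaker} than the required $O(n^{-1}\rho^2)$ for $\rho\le1$. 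What you must prove is that the bracket continues analytically across the whole cut $(-\infty,0]$, i.e.\ that the jumps of $\Li_{-\nu}(e^{-z})$ and of $\Gamma(1+\nu)z^{-\nu-1}$ across the negative real axis cancel identically — which follows from Wood's formula $\Im\Li_p(e^{-z})=\mp\pi s^{p-1}/\Gamma(p)$ on $z=se^{\pm i\pi}$ together with the reflection formula (the same ingredients as~\eqref{eq: Im psi cut}), plus Flajolet's expansion~\eqref{eq: Lip z->0} for the behaviour at $z=0$ and the decay~\eqref{eq: Li_p z->-oo} to close the contour at $\Re z=-\infty$. Without this identity the entire $\rho\le1$ argument collapses, so it cannot be left at the level of ``turn out to consist of entire powers of $z$.'' Secondary but real issues: the lower bounds $|1+\mu\psi(z)|\ge c_\nu$ on the circle and $|1+\mu\psi_\pm(s)|\ge c_\nu(1+\mu s^{-\nu})$ on the rays need proofs (the paper's Lemma~\ref{lem: |1+mu...|} uses a non-constructive compactness argument for the latter), and the $O$-terms in your expansions of $\psi$ must be shown uniform on all of $0<s\le1$, not just asymptotically as $s\to0$.
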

\begin{proof}
Follows from Theorems \ref{thm: rho>=1}~and \ref{thm: rho<=1}.
\end{proof}

We remark that in the limiting case~$\nu\to1$, when our method 
reduces to the classical implicit Euler 
scheme~\eqref{eq: implicit Euler} for the heat 
equation~\eqref{eq: heat eqn}, it is readily seen that the error 
representation~\eqref{eq: Un-u(tn)} holds with
$
\delta^n(\mu)=(1+\mu)^{-n}-e^{-n\mu},
$
and that 
$0\le\delta^n(\mu)\le Cn^{-1}\min\bigl((\mu n)^2,(\mu n)^{-1}\bigr)$,
consistent with Theorem~\ref{thm: delta^n(mu)}.
\section{Error estimates}\label{sec: error}

We begin this section with the basic error bound that applies even 
when no smoothness is assumed for the initial data.

\begin{theorem}\label{thm: error nonsmooth}
For any~$u_0\in\Hilb$, the solutions of \eqref{eq: ivp}~and
\eqref{eq: DG} satisfy
\[
\|U^n-u(t_n)\|\le Ct_n^{-1}\Dt\|u_0\|
  \quad\text{for $n=1$, $2$, $3$, \dots.}
\]
\end{theorem}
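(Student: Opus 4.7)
The plan is to combine the Fourier error representation \eqref{eq: Un-u(tn)} with the key estimate of Theorem~\ref{thm: delta^n(mu)}. Since the eigenfunctions~$\phi_m$ form a complete orthonormal system, Parseval's identity gives
\[
\|U^n-u(t_n)\|^2=\sum_{m=1}^\infty|\delta^n(\lambda_m\Dt^\nu)|^2\,|u_{0m}|^2.
\]
So the job reduces to extracting a uniform-in-$\mu$ bound on $|\delta^n(\mu)|$ from Theorem~\ref{thm: delta^n(mu)}.

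The crucial observation is that the function $x\mapsto\min(x^2,x^{-1})$ on $(0,\infty)$ is bounded by~$1$ (the two branches meet at $x=1$ with common value $1$). Applying this with $x=\mu n^\nu$ yields
\[
|\delta^n(\mu)|\le Cn^{-1}\quad\text{for all $\mu>0$ and all $n\ge1$,}
\]
with a constant independent of~$\mu$. This is precisely the estimate needed for the non-smooth case: it says that a single factor of $n^{-1}$ can be pulled out regardless of how large or small $\lambda_m\Dt^\nu$ happens to be, which is essential since we are making no assumption on the decay of the Fourier coefficients~$u_{0m}$.

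Substituting this bound into the Parseval identity and using $\delta^n(0)=0$ (so the term with $\lambda_m=0$, if any, causes no trouble) gives
\[
\|U^n-u(t_n)\|^2\le C^2n^{-2}\sum_{m=1}^\infty|u_{0m}|^2=C^2n^{-2}\|u_0\|^2,
\]
and the stated bound follows on taking square roots and using $n^{-1}=\Dt/t_n$.

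There is essentially no obstacle here once Theorem~\ref{thm: delta^n(mu)} is granted; the only step requiring a moment's thought is recognizing that $\min(x^2,x^{-1})\le 1$, which provides the necessary $\mu$-uniform bound. All of the analytic difficulty has been absorbed into the estimate for~$\delta^n(\mu)$, whose proof is deferred to Section~\ref{sec: technical}.
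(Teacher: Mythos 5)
Your proposal is correct and follows exactly the paper's own argument: bound $\min\bigl((\mu n^\nu)^2,(\mu n^\nu)^{-1}\bigr)\le1$ to get $|\delta^n(\mu)|\le Cn^{-1}$ uniformly in $\mu$, then apply Parseval's identity to the error expansion~\eqref{eq: Un-u(tn)} and use $n^{-1}=t_n^{-1}\Dt$. Nothing to add.
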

\begin{proof}
Theorem~\ref{thm: delta^n(mu)} implies that 
$|\delta^n(\mu)|\le Cn^{-1}$ uniformly for~$0<\mu<\infty$, and
since the $\phi_m$ are orthonormal, we see from~\eqref{eq: Un-u(tn)}
that
\begin{equation}\label{eq: Parseval}
\|U^n-u(t_n)\|^2=\sum_{m=1}^\infty
  \bigl[\delta^n(\lambda_m\Dt^\nu) u_{0m}\bigr]^2
	\le (Cn^{-1})^2\sum_{m=1}^\infty u_{0m}^2
	=\bigl(Cn^{-1}\|u_0\|\bigr)^2.
\end{equation}
The estimate follows after recalling that $t_n=n\Dt$ so
$n^{-1}=t_n^{-1}\Dt$.
\end{proof}

For smoother initial data, the error bound exhibits a less severe 
deterioration as~$t_n$ approaches zero.

\begin{lemma}\label{lem: error smooth}
Consider the solutions of \eqref{eq: ivp}~and \eqref{eq: DG}.
\begin{enumerate}
\item If $0<\nu\le1/2$ and $A^2u_0\in\Hilb$, then 
\[
\|U^n-u(t_n)\|\le Ct_n^{2\nu-1}\Dt\|A^2u_0\|\le C\Dt^{2\nu}\|A^2u_0\|.
\]
\item If $1/2\le\nu<1$ and $A^{1/\nu}u_0\in\Hilb$, then
\[
\|U^n-u(t_n)\|\le C\Dt\|A^{1/\nu}u_0\|.
\]
\end{enumerate}
\end{lemma}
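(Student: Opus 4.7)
The plan is to imitate the proof of Theorem~\ref{thm: error nonsmooth}, applying Parseval's identity to the error formula~\eqref{eq: Un-u(tn)}, but extracting a power of $\mu$ from Theorem~\ref{thm: delta^n(mu)} by interpolating between the two sides of its $\min$. Specifically, I would first observe the elementary inequality
\[
\min\bigl(\rho^2,\rho^{-1}\bigr)\le \rho^r\quad\text{for all $\rho>0$ and $-1\le r\le 2$,}
\]
since for $\rho\le 1$ the minimum is $\rho^2\le\rho^r$ (as $r\le 2$) and for $\rho\ge 1$ it is $\rho^{-1}\le\rho^r$ (as $r\ge -1$). Applied with $\rho=\mu n^\nu$, Theorem~\ref{thm: delta^n(mu)} yields, for any $r\in[-1,2]$,
\[
|\delta^n(\mu)|\le Cn^{-1}(\mu n^\nu)^r=C\mu^r n^{r\nu-1}.
\]

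Substituting $\mu=\lambda_m\Dt^\nu$ and using $n^{r\nu-1}=t_n^{r\nu-1}\Dt^{1-r\nu}$ converts this into
\[
|\delta^n(\lambda_m\Dt^\nu)|\le C\lambda_m^r\,t_n^{r\nu-1}\,\Dt,
\]
and then Parseval's identity applied to~\eqref{eq: Un-u(tn)} gives
\[
\|U^n-u(t_n)\|^2\le C^2 t_n^{2(r\nu-1)}\Dt^2\sum_{m=1}^\infty \lambda_m^{2r}u_{0m}^2=C^2 t_n^{2(r\nu-1)}\Dt^2\|A^ru_0\|^2,
\]
whenever $A^ru_0\in\Hilb$; this in fact reproduces the full main estimate~\eqref{eq: main result} for $r\in[-1,2]$.

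To conclude, I would specialize the exponent. In case~1, $0<\nu\le 1/2$, taking $r=2\in[-1,2]$ yields $\|U^n-u(t_n)\|\le Ct_n^{2\nu-1}\Dt\|A^2u_0\|$; the second inequality follows because $t_n^{2\nu-1}\Dt=n^{2\nu-1}\Dt^{2\nu}\le\Dt^{2\nu}$ when $2\nu-1\le 0$ and $n\ge 1$. In case~2, $1/2\le\nu<1$, the choice $r=1/\nu$ lies in $(1,2]\subset[-1,2]$ and gives $r\nu-1=0$, so the bound collapses to $C\Dt\|A^{1/\nu}u_0\|$. There is no real obstacle here beyond the interpolation observation above; the work has already been done in Theorem~\ref{thm: delta^n(mu)}, and the role of the hypotheses $\nu\le 1/2$ (to justify discarding the factor $n^{2\nu-1}\le 1$) and $\nu\ge 1/2$ (to ensure $1/\nu\le 2$ so that the interpolation is admissible) is simply to accommodate the endpoint $r=\min(2,1/\nu)$.
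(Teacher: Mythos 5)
Your proof is correct and follows essentially the same route as the paper: a pointwise bound on $\delta^n(\lambda_m\Dt^\nu)$ extracted from Theorem~\ref{thm: delta^n(mu)} (the paper factors $\lambda_m^2$, resp.\ $\lambda_m^{1/\nu}$, out of the $\min$ and bounds the remainder by $1$, which is algebraically the same as your inequality $\min(\rho^2,\rho^{-1})\le\rho^r$ for $-1\le r\le 2$), followed by Parseval's identity via~\eqref{eq: Parseval}. Your unified formulation has the pleasant side effect of yielding the general estimate of Theorem~\ref{thm: interp error} directly, without the separate interpolation step the paper uses, but this is a streamlining rather than a different argument.
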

\begin{proof}
In the first case, since $\lambda_m\Dt^\nu n^\nu=\lambda_m t_n^\nu$,
\begin{align*}
|\delta^n(\lambda_m\Dt^\nu)|&\le Ct_n^{-1}\Dt\,
\min\bigl((\lambda_mt_n^\nu)^2,(\lambda_mt_n^\nu)^{-1}\bigr)\\
  &=C t_n^{2\nu-1}\Dt\,\lambda_m^2
    \min\bigl(1,(\lambda_mt_n^\nu)^{-3}\bigr)
      \le Ct_n^{2\nu-1}\Dt\,\lambda_m^2,
\end{align*}
so by \eqref{eq: ||A^rv||}~and \eqref{eq: Parseval},
\[
\|U^n-u(t_n)\|^2\le\sum_{m=1}^\infty
  \bigl(Ct_n^{2\nu-1}\Dt\,\lambda_m^2u_{0m}\bigr)^2\\
  =\bigl(Ct_n^{2\nu-1}\Dt\,\|A^2u_0\|\bigr)^2,
\]
with $t_n^{2\nu-1}\Dt=n^{2\nu-1}\Dt^{2\nu}\le\Dt^{2\nu}$.
The second case follows in a similar fashion, because
$n^{-1}=\Dt\,\lambda_m^{1/\nu}(\lambda_mt_n^\nu)^{-1/\nu}$ 
implies that
\[
|\delta^n(\lambda_m\Dt^\nu)|\le C\Dt\,\lambda_m^{1/\nu}
\min\bigl((\lambda_mt_n^\nu)^{2-1/\nu},
  (\lambda_mt_n^\nu)^{-1-1/\nu}\bigr)
    \le C\Dt\,\lambda_m^{1/\nu}.
\]
\end{proof}

We are now ready to prove our main result.

\begin{theorem}\label{thm: interp error}
The solutions of \eqref{eq: ivp}~and \eqref{eq: DG} satisfy
\[
\|U^n-u(t_n)\|\le C t_n^{r\nu-1}\Dt\|A^ru_0\|
	\quad\text{for $0\le r\le\min(2,1/\nu)$.}
\]
\end{theorem}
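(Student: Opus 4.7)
My plan is to deduce the general estimate by a straightforward interpolation between the two endpoint cases already established in this section: Theorem~\ref{thm: error nonsmooth} at $r=0$ and Lemma~\ref{lem: error smooth} at the endpoint $r = R := \min(2,1/\nu)$. The interpolation is painless here because the error representation~\eqref{eq: Un-u(tn)} is diagonal in the eigenbasis $\{\phi_m\}$ of~$A$, so everything reduces to componentwise estimates on the scalar multiplier $\delta^n(\lambda_m\Dt^\nu)$; no operator-theoretic interpolation machinery is needed.

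First I would extract the pointwise bounds that sit inside the two endpoint arguments, namely
\[
|\delta^n(\lambda_m\Dt^\nu)|\le C\,t_n^{-1}\Dt
\qquad\text{and}\qquad
|\delta^n(\lambda_m\Dt^\nu)|\le C\,t_n^{R\nu-1}\Dt\,\lambda_m^R,
\]
both valid for every eigenmode $m$ and every $n\ge1$ with the constant $C$ independent of $m$, $n$, and $\Dt$. The second bound packages together both cases of Lemma~\ref{lem: error smooth}: when $\nu\le 1/2$ we have $R=2$ and the exponent $R\nu-1=2\nu-1$ matches case~(1), while for $\nu\ge 1/2$ we have $R=1/\nu$ and $R\nu-1=0$, matching the uniform-in-$t_n$ bound of case~(2). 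Then, for $r\in[0,R]$, I set $\theta=r/R\in[0,1]$ and take the weighted geometric mean, i.e.\ the $(1-\theta)$-th power of the first bound multiplied by the $\theta$-th power of the second. The exponents of $t_n$ combine as $-(1-\theta)+\theta(R\nu-1)=\theta R\nu-1=r\nu-1$ and the exponent of $\lambda_m$ becomes $\theta R=r$, yielding
\[
|\delta^n(\lambda_m\Dt^\nu)|\le C\,t_n^{r\nu-1}\Dt\,\lambda_m^r.
\]

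To conclude, I would apply Parseval to~\eqref{eq: Un-u(tn)} and invoke the characterization~\eqref{eq: ||A^rv||} to get
\[
\|U^n-u(t_n)\|^2=\sum_{m=1}^\infty[\delta^n(\lambda_m\Dt^\nu)u_{0m}]^2
\le (C\,t_n^{r\nu-1}\Dt)^2\sum_{m=1}^\infty\lambda_m^{2r}u_{0m}^2
=(C\,t_n^{r\nu-1}\Dt\,\|A^ru_0\|)^2.
\]
There is no genuine obstacle: all the hard analysis has been absorbed into Theorem~\ref{thm: delta^n(mu)}, which powers both endpoint estimates, and the eigenbasis diagonalization reduces interpolation to a one-line geometric mean. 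The only point to verify is that Lemma~\ref{lem: error smooth}, though stated as a norm bound, actually delivers the uniform componentwise estimate displayed above; this is immediate from inspecting its proof, where $|\delta^n(\lambda_m\Dt^\nu)|$ is bounded directly before Parseval is applied.
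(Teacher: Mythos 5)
Your proof is correct and follows essentially the same route as the paper: the paper likewise deduces the result by interpolating between Theorem~\ref{thm: error nonsmooth} ($r=0$) and the endpoint case supplied by Lemma~\ref{lem: error smooth}, writing the bound as a weighted geometric mean of the two endpoint estimates with $r=2\theta$ (resp.\ $r=\theta/\nu$). Your componentwise justification---taking the geometric mean of the two pointwise bounds on the diagonal multiplier $\delta^n(\lambda_m\Dt^\nu)$ and then applying Parseval---is exactly the argument the paper compresses into the phrase ``by interpolation''.
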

\begin{proof}
If $0<\nu\le1/2$ and $0<\theta<1$, then by interpolation
\[
\|U^n-u(t_n)\|
	\le C\bigl(t_n^{-1}\Dt\bigr)^{1-\theta}
	\bigl(t_n^{2\nu-1}\Dt\bigr)^\theta 
	\|A^{2\theta}u_0\|
	=Ct_n^{2\nu\theta-1}\Dt\|A^{2\theta}u_0\|,
\]
and the estimate follows by putting $r=2\theta$.
Similarly, if $1/2\le\nu<1$, then
\[
\|U^n-u(t_n)\|\le C\bigl(t_n^{-1}\Dt\bigr)^{1-\theta}
	\Dt^\theta\|A^{\theta/\nu}u_0\|
	=Ct_n^{\theta-1}\Dt\|A^{\theta/\nu}u_0\|,
\]
and the estimate follows by putting $r=\theta/\nu$.  
\end{proof}

\section{Technical proofs}\label{sec: technical}

It remains to prove Theorem~\ref{thm: delta^n(mu)}.  In this section
only, $C$ always denotes an absolute constant and we use subscripts
in cases where the constant might depend on some parameters; for 
instance $C_\nu$ may depend on the fractional diffusion 
exponent~$\nu$. 

Since the integrand of~\eqref{eq: delta} is $O(z^{\nu-1})$ as~$z\to0$, 
we may collapse the Hankel contour onto~$\Contour_+-\Contour_-$, for
$\Contour_\pm$ given by~\eqref{eq: Contour pm}.  In this way, 
defining  
\[
\psi_\pm(s)=\psi(se^{\pm i\pi})\quad\text{for $0<s<\infty$,}
\]
we find that
\begin{multline*}
\int_{\Contour_\pm}e^{nz}
	\biggl(\frac{1}{1+\mu\psi(z)}\,\frac{z}{e^z-1}
	-\frac{1}{1+\mu z^{-\nu}}
	\biggr)\frac{dz}{z}\\
  =\int_0^\infty e^{-ns}\biggl(
  \frac{1}{1+\mu\psi_\pm(s)}\,\frac{s}{1-e^{-s}}
  -\frac{1}{1+\mu s^{-\nu}e^{\mp i\pi\nu}}\biggr)\,\frac{ds}{s}.
\end{multline*}
By \eqref{eq: bar psi}~and \eqref{eq: Im psi cut},
\begin{equation}\label{eq: bar psi pm}
\psi_-(s)=\overline{\psi_+(s)}
\quad\text{and}\quad
\Im\psi_\pm(s)=\mp(1-e^{-s})s^{-\nu-1}\sin\pi\nu,
\end{equation}
so
\[
\frac{1}{1+\mu\psi_+(s)}-\frac{1}{1+\mu\psi_-(s)}
  =\frac{2i\mu\Im\psi_-(s)}{|1+\mu\psi_\pm(s)|^2}
  =\frac{2i\mu s^{-\nu}\sin\pi\nu}{|1+\mu\psi_\pm(s)|^2}
  \,\frac{1-e^{-s}}{s},
\]
and similarly,
\[
\frac{1}{1+\mu s^{-\nu}e^{-i\pi\nu}}
-\frac{1}{1+\mu s^{-\nu}e^{i\pi\nu}}
  =\frac{2i\mu s^{-\nu}\sin\pi\nu}{|1+\mu s^{-\nu}e^{\mp i\pi\nu}|^2}.
\]
Thus, the representation~\eqref{eq: delta} implies
\begin{equation}\label{eq: delta cut}
\delta^n(\mu)=\frac{\sin\pi\nu}{\pi}\int_0^\infty e^{-ns}\mu s^{-\nu}
\biggl(\frac{1}{|1+\mu\psi_+(s)|^2}
  -\frac{1}{|1+\mu s^{-\nu}e^{-i\pi\nu}|^2}\biggr)\,\frac{ds}{s}.
\end{equation}
We will estimate this integral with the help of the following 
sequence of lemmas.  

\begin{lemma}\label{lem: |1+X...|}
If $X\ge 0$ then
$|1+Xe^{\pm i\pi\nu}|^{-2}\le(1-\nu)^{-2}(1+X^2)^{-1}$.
\end{lemma}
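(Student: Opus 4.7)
The plan is to reduce the claim to a sharp lower bound on the ratio $|1+Xe^{\pm i\pi\nu}|^2/(1+X^2)$ and then to a trigonometric inequality that follows from Jordan's inequality.

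First I would expand $|1+Xe^{\pm i\pi\nu}|^2 = 1+2X\cos(\pi\nu)+X^2$, so the lemma is equivalent to
\[
h(X) := \frac{1+2X\cos(\pi\nu)+X^2}{1+X^2} \geq (1-\nu)^2 \quad\text{for all $X \geq 0$.}
\]
A quick differentiation shows the only critical point of $h$ on $(0,\infty)$ is at $X=1$, with $h(1)=1+\cos(\pi\nu)$, whereas $h(0)=\lim_{X\to\infty}h(X)=1$. Hence $\min_{X\geq 0}h(X)=\min\bigl(1,\,1+\cos(\pi\nu)\bigr)$, which equals $1$ when $\nu\leq 1/2$ and equals $1+\cos(\pi\nu)=2\cos^2(\pi\nu/2)$ when $\nu\geq 1/2$.

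For $\nu\leq 1/2$ the inequality collapses to $1\geq(1-\nu)^2$, which is trivial. For $\nu\geq 1/2$ it reduces to $\sqrt{2}\cos(\pi\nu/2)\geq 1-\nu$, which I would establish by writing $\cos(\pi\nu/2)=\sin(\pi(1-\nu)/2)$ and invoking Jordan's inequality $\sin x\geq 2x/\pi$ on $[0,\pi/2]$ with $x=\pi(1-\nu)/2$. This yields $\cos(\pi\nu/2)\geq 1-\nu$ and therefore $2\cos^2(\pi\nu/2)\geq 2(1-\nu)^2\geq(1-\nu)^2$, with a factor of $2$ to spare.

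The only nontrivial input is Jordan's inequality, applied in the regime $\nu\to 1^-$, where the minimum of $h$ collapses like $\pi^2(1-\nu)^2/2$. This is precisely where the factor $(1-\nu)^{-2}$ in the lemma originates, so the estimate is tight up to an absolute constant, and any sharp proof must encode essentially this trigonometric fact.
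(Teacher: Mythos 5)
Your proof is correct and follows essentially the same route as the paper: both expand $|1+Xe^{\pm i\pi\nu}|^2=1+2X\cos\pi\nu+X^2$, reduce the ratio to $1+\tfrac{2X}{1+X^2}\cos\pi\nu\ge\min(1,1+\cos\pi\nu)$ (the paper via the bound $0\le 2X/(1+X^2)\le1$ rather than by locating the critical point at $X=1$), and then use $1+\cos\pi\nu=2\cos^2(\pi\nu/2)\ge2(1-\nu)^2$, which is exactly your Jordan-inequality step. The differences are purely presentational.
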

\begin{proof}
Since $0\le2X/(1+X^2)\le1$,
\[
\frac{|1+Xe^{\pm i\pi\nu}|^2}{1+X^2}
  =\frac{|e^{\mp i\pi\nu}+X|^2}{1+X^2}=1+\frac{2X}{1+X^2}\,\cos\pi\nu
  \ge\min(1,1+\cos\pi\nu),
\]
and the result follows because 
$1+\cos\pi\nu=2\cos^2(\pi\nu/2)\ge2(1-\nu)^2$.
\end{proof}

\begin{lemma}\label{lem: |1+mu...|}
If $\mu\ge0$ and $s>0$, then
$|1+\mu\psi_\pm(s)|^{-2}\le C_\nu(1+\mu^2 s^{-2\nu})^{-1}$.
\end{lemma}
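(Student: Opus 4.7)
The plan is to exploit the decomposition
\[
|1+\mu\psi_+(s)|^2=(1+\mu\Re\psi_+(s))^2+\mu^2(\Im\psi_+(s))^2,
\]
together with the symmetry $\psi_-(s)=\overline{\psi_+(s)}$ from \eqref{eq: bar psi pm} (which means it suffices to treat $\psi_+$), and to split $(0,\infty)$ into three regions according to thresholds $s_0<s_1$ that depend only on $\nu$. In each region the bound will follow from the appropriate asymptotic description of $\psi_+$ plus Lemma~\ref{lem: |1+X...|}.

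For small $s\le s_0$ I would specialize the expansion \eqref{eq: psi z->0} to $z=se^{i\pi}$ to write
\[
\psi_+(s)=s^{-\nu}e^{-i\pi\nu}+r(s),\qquad|r(s)|\le C_\nu s^{1-\nu},
\]
and then
\[
1+\mu\psi_+(s)=\bigl(1+\mu s^{-\nu}e^{-i\pi\nu}\bigr)+\mu r(s).
\]
Lemma~\ref{lem: |1+X...|} with $X=\mu s^{-\nu}$ bounds the first term below by $(1-\nu)\sqrt{1+\mu^2s^{-2\nu}}$, while $\mu|r(s)|\le C_\nu s\cdot\mu s^{-\nu}\le C_\nu s\sqrt{1+\mu^2s^{-2\nu}}$. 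Taking $s_0=(1-\nu)/(2C_\nu)$ and applying the reverse triangle inequality gives $|1+\mu\psi_+(s)|\ge\tfrac12(1-\nu)\sqrt{1+\mu^2s^{-2\nu}}$, which is the required lower bound with constant depending only on $\nu$.

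For large $s\ge s_1$ I would use \eqref{eq: psi Re z->-oo}, which gives $\psi_+(s)=\frac{\sin\pi\nu}{\pi\nu}(s+i\pi)^{-\nu}+O(s^{-\nu-1})$. The real part of $(s+i\pi)^{-\nu}$ equals $(s^2+\pi^2)^{-\nu/2}\cos(\nu\arctan(\pi/s))$, which approaches $s^{-\nu}$ from above as $s\to\infty$. Hence for $s_1=s_1(\nu)$ large enough, $\Re\psi_+(s)\ge c_\nu s^{-\nu}>0$, and consequently
\[
|1+\mu\psi_+(s)|^2\ge (1+\mu\Re\psi_+(s))^2\ge 1+c_\nu^2\mu^2s^{-2\nu}.
\]

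The intermediate range $s\in[s_0,s_1]$ is the main obstacle, since no clean asymptotic gives a quantitative angular bound on $\psi_+(s)$ there. My plan is a soft compactness argument: by \eqref{eq: Im psi cut}, $|\Im\psi_+(s)|=(1-e^{-s})s^{-\nu-1}\sin\pi\nu$ is continuous and strictly positive on $[s_0,s_1]$, hence bounded below by some $\delta_\nu>0$; likewise $|\psi_+(s)|\le M_\nu$. Minimizing $\mu\mapsto(1+\mu\Re\psi_+)^2+\mu^2(\Im\psi_+)^2$ over $\mu\ge0$ yields
\[
|1+\mu\psi_+(s)|^2\ge\min\!\Bigl(1,\frac{(\Im\psi_+(s))^2}{|\psi_+(s)|^2}\Bigr)\ge\min(1,\delta_\nu^2/M_\nu^2),
\]
while the elementary inequality $|1+\mu\psi_+(s)|^2\ge\mu^2(\Im\psi_+(s))^2\ge\delta_\nu^2\mu^2$ controls the $\mu$-large regime. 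Combining gives $|1+\mu\psi_+(s)|^2\ge c_\nu(1+\mu^2)\ge c'_\nu(1+\mu^2s^{-2\nu})$ because $s^{-2\nu}$ is bounded on $[s_0,s_1]$. The resulting constant $C_\nu$ depends on $\nu$ only, consistent with the statement of the lemma; the deeper reason the argument works is that $\psi_+(s)$ never visits the negative real axis (since $\Im\psi_+(s)<0$), so $1+\mu\psi_+(s)$ cannot vanish for any $\mu\ge0$.
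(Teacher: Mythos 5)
Your argument is correct, and it is organized differently from the paper's. The paper normalizes first, setting $\phi(s)=s^\nu\psi_+(s)$ and $X=\mu s^{-\nu}$, notes via Lemma~\ref{lem: psi asymp} that $\phi$ extends continuously to the compactified half-line $[0,\infty]$ with $\phi(0)=e^{-i\pi\nu}$ and $\phi(\infty)=\sin\pi\nu/(\pi\nu)$, and then applies a single compactness argument to $f(s,X)=|1+X\phi(s)|^2/(1+X^2)$ on $[0,\infty]\times[0,\infty]$: the function is continuous and strictly positive everywhere (positivity coming from $\Im\phi(s)<0$ for $s<\infty$ and from $\phi(\infty)>0$), hence bounded below. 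Your three-region split rests on exactly the same ingredients --- the asymptotics of Lemma~\ref{lem: psi asymp}, the sign of $\Im\psi_\pm$ from~\eqref{eq: bar psi pm}, and Lemma~\ref{lem: |1+X...|} --- but trades the one-shot compactness for quantitative perturbation estimates at the two ends, retaining a soft extreme-value argument only on the compact middle interval $[s_0,s_1]$. What this buys is partial explicitness: near $s=0$ you get the concrete constant $\tfrac12(1-\nu)$, and for large $s$ the constant $\min(1,c_\nu^2)$ with $c_\nu\approx\sin\pi\nu/(2\pi\nu)$, which speaks directly to the paper's later remark (Section~\ref{sec: numerical examples}) that its own proof of this lemma is not constructive and leaves the $\nu$-dependence of $C$ in Theorem~\ref{thm: delta^n(mu)} unresolved. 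The residual non-constructive pieces in your version are the thresholds $s_0,s_1$ (which depend on the unquantified remainders in \eqref{eq: psi z->0}~and \eqref{eq: psi Re z->-oo}) and the bound $M_\nu$ on $|\psi_+|$ over $[s_0,s_1]$; your minimization over $\mu$ in the middle region, yielding the lower bound $\min\bigl(1,(\Im\psi_+)^2/|\psi_+|^2\bigr)$ together with $\mu^2(\Im\psi_+)^2$, is a correct and slightly sharper way to handle the two $\mu$-regimes there than the paper's appeal to $f(s,\infty)=|\phi(s)|^2$.
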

\begin{proof}
Lemma~\ref{lem: psi asymp} implies that
\begin{equation}\label{eq: psi s->0}
\psi_\pm(s)=e^{\mp i\pi\nu}(s^{-\nu}-\tfrac12s^{1-\nu})
	-\frac{\zeta(-\nu)}{\Gamma(1+\nu)}\,s+O(s^{2-\nu})
	\quad\text{as $s\to0$}
\end{equation}
and
\begin{equation}\label{eq: psi s->oo}
\psi_\pm(s)
	=\frac{\sin\pi\nu}{\pi\nu}\,s^{-\nu}+O(s^{-\nu-1})
	\quad\text{as $s\to\infty$.}
\end{equation}
Thus, if we define $\phi(s)=s^\nu\psi_+(s)$ for $0<s<\infty$, with
\begin{equation}\label{eq: phi 0 oo}
\phi(0)=e^{-i\pi\nu}\quad\text{and}\quad
\phi(\infty)=\frac{\sin\pi\nu}{\pi\nu},
\end{equation}
then $\phi$ is continuous on the one-point 
compactification~$[0,\infty]$ of the closed half-line~$[0,\infty)$. 
Put $X=\mu s^{-\nu}$ and define
\[
f(s,X)=\frac{|1+\mu\psi_+(s)|^2}{1+X^2}=\frac{|1+X\phi(s)|^2}{1+X^2}
\]
for $0\le s\le\infty$ and $0\le X<\infty$, with 
$f(s,\infty)=|\phi(s)|^2$, so that $f$ is continuous on the compact
topological space~$[0,\infty]\times[0,\infty]$.  
It therefore suffices to prove that $f$ is 
strictly positive everywhere.  By~\eqref{eq: bar psi pm},
\begin{equation}\label{eq: Im phi}
\Im\phi(s)=-\frac{1-e^{-s}}{s}\sin\pi\nu<0
  \quad\text{for $0<s<\infty$},
\end{equation}
and $\Im\phi(0)=-\sin\pi\nu<0$ by~\eqref{eq: phi 0 oo}, so
$|1+X\phi(s)|^2\ge[X\Im\phi(s)]^2>0$ for $0\le s<\infty$~and 
$0<X<\infty$.  Moreover, $|1+X\phi(\infty)|^2\ge1$ because 
$\phi(\infty)$ is real and positive, and $f(s,0)=1$ 
for~$0\le s\le\infty$. Finally, \eqref{eq: phi 0 oo}~and
\eqref{eq: Im phi} imply that $f(s,\infty)=|\phi(s)|^2>0$ 
for~$0\le s\le\infty$.
\end{proof}

\begin{lemma}\label{lem: diff squares}
For $\mu\ge0$~and $s>0$,
\begin{multline*}
|1+\mu s^{-\nu}e^{\mp i\pi\nu}|^2-|1+\mu\psi_{\pm}(s)|^2\\
	=\mu B_+(s)\bigl(1+\mu s^{-\nu}e^{i\pi\nu}\bigr)
      +\mu B_-(s)\bigl(1+\mu\psi_+(s)\bigr)
	=\mu B_1(s)+\mu^2B_2(s),
\end{multline*}
where $B_\pm(s)=s^{-\nu}e^{\mp i\pi\nu}-\psi_\pm(s)$ and
\begin{align*}
B_1(s)&=B_+(s)+B_-(s)
	=2\bigl(s^{-\nu}\cos\pi\nu-\Re\psi_\pm(s)\bigr),\\
B_2(s)&=B_+(s)s^{-\nu}e^{i\pi\nu}+B_-(s)\psi_+(s)
	=s^{-2\nu}-\psi_+(s)\psi_-(s).
\end{align*}
\end{lemma}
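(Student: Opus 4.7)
The plan is to view the identity as an instance of $ab-cd=(a-c)b+c(b-d)$ applied to suitable complex factorizations, and then to reorganize by powers of~$\mu$. The key inputs are $|w|^2=w\bar w$ together with the conjugation relation $\psi_-(s)=\overline{\psi_+(s)}$ from~\eqref{eq: bar psi pm} and $\overline{1+\mu s^{-\nu}e^{\mp i\pi\nu}}=1+\mu s^{-\nu}e^{\pm i\pi\nu}$, which holds because $\mu s^{-\nu}$ is real.

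Taking the upper-sign convention, I would first write $|1+\mu s^{-\nu}e^{-i\pi\nu}|^2=(1+\mu s^{-\nu}e^{-i\pi\nu})(1+\mu s^{-\nu}e^{i\pi\nu})$ and $|1+\mu\psi_+(s)|^2=(1+\mu\psi_+(s))(1+\mu\psi_-(s))$. Setting $a=1+\mu s^{-\nu}e^{-i\pi\nu}$, $b=1+\mu s^{-\nu}e^{i\pi\nu}$, $c=1+\mu\psi_+(s)$, $d=1+\mu\psi_-(s)$, a direct check gives $a-c=\mu B_+(s)$ and $b-d=\mu B_-(s)$, so the identity $ab-cd=(a-c)b+c(b-d)$ produces exactly the middle expression $\mu B_+(s)(1+\mu s^{-\nu}e^{i\pi\nu})+\mu B_-(s)(1+\mu\psi_+(s))$. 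The lower-sign case follows by taking complex conjugates of everything.

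For the second equality I would expand either $ab-cd$ or the middle expression in powers of~$\mu$. The two $\mu^2$ cross contributions $\pm\mu^2 s^{-\nu}e^{i\pi\nu}\psi_+(s)$ cancel, leaving $\mu s^{-\nu}(e^{-i\pi\nu}+e^{i\pi\nu})-\mu(\psi_+(s)+\psi_-(s))+\mu^2(s^{-2\nu}-\psi_+(s)\psi_-(s))$. Using $e^{-i\pi\nu}+e^{i\pi\nu}=2\cos\pi\nu$ and $\psi_+(s)+\psi_-(s)=2\Re\psi_+(s)=2\Re\psi_\pm(s)$ (since conjugation preserves the real part), this collapses to $\mu B_1(s)+\mu^2 B_2(s)$ with $B_1$ and~$B_2$ as given. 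The entire argument is elementary algebra with no obstacle; the only point requiring care is consistent tracking of the $\pm$ signs in $e^{\pm i\pi\nu}$ across conjugations.
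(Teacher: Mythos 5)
Your proof is correct and is essentially the paper's own argument: the paper also writes each squared modulus as a product of conjugate factors and applies the telescoping identity $(1+a)(1+\bar a)-(1+b)(1+\bar b)=(a-b)(1+\bar a)+(\bar a-\bar b)(1+b)$ with $a=\mu s^{-\nu}e^{\mp i\pi\nu}$, $b=\mu\psi_\pm$, then expands in powers of~$\mu$ to get $\mu B_1+\mu^2B_2$. Your $ab-cd=(a-c)b+c(b-d)$ is the same decomposition under a relabeling, and your sign-tracking and cancellation of the $\mu^2$ cross terms are both correct.
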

\begin{proof}
Put $a=\mu s^{-\nu}e^{\mp i\pi\nu}$ and $b=\mu\psi_\pm$
in the identities
\begin{align*}
|1+a|^2-|1+b|^2&=(a-b)(1+\bar a)+(\bar a-\bar b)(1+b)\\
	&=(a-b)+(\bar a-\bar b)+(a\bar a-b\bar b).
\end{align*}
\end{proof}

Notice that $B_1$~and $B_2$ are real, whereas
$B_-(s)=\overline{B_+(s)}$.

\begin{lemma}\label{lem: B asymp}
As $s\to0$,
\[
B_\pm(s)=O(s^{1-\nu}),\quad
B_1(s)=s^{1-\nu}\cos\pi\nu+O(s),\quad
B_2(s)=s^{1-2\nu}+O(s^{1-\nu}),
\]
and as~$s\to\infty$,
\[
B_\pm(s)=O(s^{-\nu}),\quad B_1(s)=O(s^{-\nu}),\quad
B_2(s)=O(s^{-2\nu}).
\]
\end{lemma}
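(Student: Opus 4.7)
The plan is to reduce everything to straightforward substitution of the asymptotic expansions (\ref{eq: psi s->0}) and (\ref{eq: psi s->oo}) of $\psi_\pm$ into the definitions $B_\pm(s)=s^{-\nu}e^{\mp i\pi\nu}-\psi_\pm(s)$, $B_1=B_++B_-$, and $B_2=s^{-2\nu}-\psi_+\psi_-$. The only real issue is bookkeeping of error terms and the comparisons between the various powers of $s$ that appear.

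First, for the regime $s\to0$ I would substitute (\ref{eq: psi s->0}) into $B_\pm$. The leading singular terms $s^{-\nu}e^{\mp i\pi\nu}$ cancel by design, leaving
\[
B_\pm(s)=\tfrac12 e^{\mp i\pi\nu}s^{1-\nu}
+\frac{\zeta(-\nu)}{\Gamma(1+\nu)}\,s+O(s^{2-\nu}),
\]
which is $O(s^{1-\nu})$ since $s^{1-\nu}$ dominates $s$ as $s\to 0$ for $0<\nu<1$. Summing $B_++B_-$ gives $B_1(s)=s^{1-\nu}\cos\pi\nu+\tfrac{2\zeta(-\nu)}{\Gamma(1+\nu)}s+O(s^{2-\nu})$, and absorbing the last two terms into $O(s)$ yields the stated estimate.

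For $B_2(s)=s^{-2\nu}-\psi_+(s)\psi_-(s)$ I would expand the product of the two series, tracking only terms no smaller than $s^{1-\nu}$. The cross terms produce $s^{-2\nu}$ (cancelling the explicit $s^{-2\nu}$), together with a $-s^{1-2\nu}$ contribution from the $s^{-\nu}\cdot s^{1-\nu}$ pairings (whose phases multiply to unity), plus genuinely smaller remainders of order $s^{1-\nu}$, $s^{2-2\nu}$, etc.; comparing exponents gives $s^{1-\nu}=o(s^{1-2\nu})$ as $s\to0$, so the final estimate is $B_2(s)=s^{1-2\nu}+O(s^{1-\nu})$.

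For $s\to\infty$ the analysis is simpler: equation (\ref{eq: psi s->oo}) gives $\psi_\pm(s)=\tfrac{\sin\pi\nu}{\pi\nu}s^{-\nu}+O(s^{-\nu-1})$, so $B_\pm(s)=(e^{\mp i\pi\nu}-\tfrac{\sin\pi\nu}{\pi\nu})s^{-\nu}+O(s^{-\nu-1})=O(s^{-\nu})$, and $B_1$ inherits the same order. For $B_2$ the product $\psi_+\psi_-$ is $(\tfrac{\sin\pi\nu}{\pi\nu})^2 s^{-2\nu}+O(s^{-2\nu-1})$, so the difference with $s^{-2\nu}$ is $O(s^{-2\nu})$. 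The main obstacle, such as it is, is simply verifying in each of the six cases that the error term I drop is indeed asymptotically smaller than the term I keep; this amounts to checking exponent inequalities that all reduce to $0<\nu<1$.
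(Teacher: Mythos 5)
Your proposal is correct and follows exactly the route the paper intends: the paper's proof is the single line ``Follows using \eqref{eq: psi s->0}~and \eqref{eq: psi s->oo},'' and your substitution of those expansions into $B_\pm$, $B_1=B_++B_-$ and $B_2=s^{-2\nu}-\psi_+\psi_-$, together with the exponent comparisons ($s=o(s^{1-\nu})$, $s^{2-2\nu}=O(s^{1-\nu})$, $s^{1-\nu}=o(s^{1-2\nu})$ as $s\to0$), supplies precisely the omitted bookkeeping. All six estimates check out as you state them.
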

\begin{proof}
Follows using \eqref{eq: psi s->0}~and \eqref{eq: psi s->oo}.
\end{proof}

We are now ready to prove the easier half of 
Theorem~\ref{thm: delta^n(mu)}.  

\begin{theorem}\label{thm: rho>=1}
For $0<\mu<\infty$ and $n=1$, $2$, $3$, \dots,
the sequence~\eqref{eq: delta} satisfies
\[
|\delta^n(\mu)|\le C_\nu n^{-1}\rho^{-1}\quad
	\text{if $\rho=\mu n^\nu$.}
\]
\end{theorem}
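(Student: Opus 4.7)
The plan is to start from the cut-contour representation \eqref{eq: delta cut} and apply Lemma~\ref{lem: diff squares} to write the difference of reciprocals in the integrand as a single fraction with numerator $\mu B_1(s) + \mu^2 B_2(s)$ and denominator $|1+\mu\psi_+(s)|^2 \, |1+\mu s^{-\nu}e^{-i\pi\nu}|^2$. This makes explicit the near-cancellation between the two reciprocals and isolates the small quantities $B_1, B_2$. Lemmas~\ref{lem: |1+X...|} and~\ref{lem: |1+mu...|}, applied with $X = \mu s^{-\nu}$, then each supply a lower bound of the form $C_\nu(1+\mu^2 s^{-2\nu})$ for the two factors in the denominator.

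For the numerator, the asymptotic estimates of Lemma~\ref{lem: B asymp} together with the continuity of $B_1$ and $B_2$ on $(0,\infty)$ yield uniform bounds $|B_j(s)| \le C_\nu\, s^{-j\nu}\min(s,1)$ for $j=1,2$. Combined with the trivial inequality $\mu s^{-\nu} + \mu^2 s^{-2\nu} \le 2(1+\mu^2 s^{-2\nu})$, this produces
\[
\frac{|\mu B_1(s)+\mu^2 B_2(s)|}{(1+\mu^2 s^{-2\nu})^2} \le \frac{C_\nu \min(s,1)}{1+\mu^2 s^{-2\nu}},
\]
so that
\[
|\delta^n(\mu)| \le C_\nu \int_0^\infty e^{-ns}\,\frac{\mu s^{-\nu}\min(s,1)}{1+\mu^2 s^{-2\nu}}\,\frac{ds}{s}.
\]

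To finish, I would use the elementary inequality $\mu s^{-\nu}/(1+\mu^2 s^{-2\nu}) \le \mu^{-1}s^\nu$ (just drop the $1$ in the denominator) together with $\min(s,1) \le s$, reducing the problem to the gamma integral
\[
|\delta^n(\mu)| \le \frac{C_\nu}{\mu}\int_0^\infty e^{-ns}s^\nu\,ds = \frac{C_\nu\, \Gamma(1+\nu)}{\mu n^{1+\nu}} = C_\nu\, n^{-1}\rho^{-1},
\]
since $\rho = \mu n^\nu$. The whole argument is essentially an assembly of the preceding lemmas, so no delicate cancellation remains. The main bookkeeping is twofold: converting the small-$s$ and large-$s$ asymptotics of $B_1, B_2$ in Lemma~\ref{lem: B asymp} into uniform estimates over $(0,\infty)$, and accepting the wastefulness of $\min(s,1)\le s$ for $s\ge 1$, which is absorbed by the exponential factor $e^{-ns}$.
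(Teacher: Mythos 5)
Your argument is correct and follows essentially the same route as the paper: the cut representation \eqref{eq: delta cut}, the difference-of-squares identity of Lemma~\ref{lem: diff squares}, the denominator bounds of Lemmas \ref{lem: |1+X...|}~and \ref{lem: |1+mu...|}, and the asymptotics of Lemma~\ref{lem: B asymp}. The only (harmless) differences are bookkeeping: you work with the $\mu B_1+\mu^2B_2$ form and absorb one power of $1+\mu^2s^{-2\nu}$ via $\mu s^{-\nu}+\mu^2s^{-2\nu}\le 2(1+\mu^2s^{-2\nu})$, where the paper uses the $B_\pm$ form to reach the exponent $3/2$, and you finish with a single Gamma integral where the paper splits at $s=1$.
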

\begin{proof}
From \eqref{eq: delta cut}~and Lemma~\ref{lem: diff squares}, we
see that $\delta^n(\mu)$ equals
\[
\frac{\sin\pi\nu}{\pi}\int_0^\infty e^{-ns}\mu s^{-\nu}
\frac{\mu B_+(s)\bigl(1+\mu s^{-\nu}e^{i\pi\nu}\bigr)
+\mu B_-(s)\bigl(1+\mu\psi_+(s)\bigr)}%
{|1+\mu s^{-\nu}e^{i\pi\nu}|^2|1+\mu\psi_+(s)|^2}\,\frac{ds}{s},
\]
and thus, by Lemmas \ref{lem: |1+X...|}~and \ref{lem: |1+mu...|},
\[
|\delta^n(\mu)|\le C_\nu\int_0^\infty e^{-ns}\mu s^{-\nu}\,
	\frac{\mu|B_\pm(s)|}{(1+\mu^2s^{-2\nu})^{3/2}}\,\frac{ds}{s}.
\]
Lemma~\ref{lem: B asymp} implies that
$|B_\pm(s)|\le C_\nu\min\bigl(s^{1-\nu},s^{-\nu}\bigr) 
=C_\nu s^{-\nu}\min(s,1)$, so 
\[
|\delta^n(\mu)|\le C_\nu\int_0^\infty g_n(s,\mu)\,ds
\quad\text{where}\quad
g_n(s,\mu)=e^{-ns}\mu^2\,
  \frac{s^{-2\nu-1}\min(s,1)}{(1+\mu^2s^{-2\nu})^{3/2}}.
\]
The estimate for~$\delta^n(\mu)$ follows because
\[
\int_0^1 g_n(s,\mu)\,ds\le\int_0^1e^{-ns}\,\frac{s^\nu}{\mu}\,ds
  =\frac{n^{-1-\nu}}{\mu}\int_0^ne^{-s}s^\nu\,ds
  \le\frac{\Gamma(1+\nu)}{n\rho}
\]
and
\[
\int_1^\infty g_n(s,\mu)\,ds
  \le\int_1^\infty e^{-ns}\frac{s^{\nu-1}}{\mu}\,ds
  \le\int_1^\infty\frac{e^{-ns}}{\mu}\,ds
  =\frac{n^\nu}{\rho}\,\frac{e^{-n}}{n}
  \le\frac{C}{n\rho}.
\]
\end{proof}

Establishing the behaviour of~$\delta^n(\mu)$ when~$\rho=\mu n^\nu$ 
is small turns out to be more delicate, and relies on three additional 
lemmas.

\begin{lemma}\label{lem: <=3}
If $0\le\nu\le1/2$ then $x^\nu\int_x^1s^{-3\nu}\,ds\le3$ 
for~$0<x\le1$.
\end{lemma}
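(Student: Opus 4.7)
The plan is to show that $I(x):=x^\nu\int_x^1 s^{-3\nu}\,ds\le 3$ on $(0,1]$ for every $\nu\in[0,1/2]$. I would first dispatch the two extreme values of $\nu$ by direct computation: at $\nu=0$, $I(x)=1-x\le 1$, and at $\nu=1/2$, an explicit antiderivative gives $I(x)=2(1-\sqrt{x})\le 2$. For the remaining range $\nu\in(0,1/2)$, a quick case split on $\nu<1/3$, $\nu=1/3$, $\nu>1/3$ (to handle the three shapes of the antiderivative of $s^{-3\nu}$) shows that $I$ is continuous on $(0,1]$ with $I(1)=0$ and $\lim_{x\to 0^+}I(x)=0$, so the maximum of $I$ is attained at some interior critical point $x_*\in(0,1)$.

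The core step is to extract a convenient identity at $x_*$. Differentiating under the integral yields
\[
I'(x)=\nu x^{\nu-1}\int_x^1 s^{-3\nu}\,ds-x^{-2\nu},
\]
and $I'(x_*)=0$ rearranges to $\nu\int_{x_*}^1 s^{-3\nu}\,ds=x_*^{1-3\nu}$. Multiplying this identity by $x_*^\nu/\nu$ gives the master formula
\[
I(x_*)=\frac{x_*^{1-2\nu}}{\nu}.
\]
Since $x_*\le 1$ and $1-2\nu\ge 0$, this immediately yields $I(x_*)\le 1/\nu$, which is at most $3$ whenever $\nu\ge 1/3$.

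For the residual range $\nu\in(0,1/3)$ I would substitute the explicit antiderivative $\int_x^1 s^{-3\nu}\,ds=(1-x^{1-3\nu})/(1-3\nu)$ into the critical-point identity and solve, obtaining $x_*^{1-3\nu}=\nu/(1-2\nu)$. Rewriting the master formula as $I(x_*)=x_*^{1-2\nu}/\nu=x_*^\nu\cdot x_*^{1-3\nu}/\nu$ then collapses to $I(x_*)=x_*^\nu/(1-2\nu)\le 1/(1-2\nu)\le 3$, the last step because $\nu\le 1/3$ forces $1-2\nu\ge 1/3$. No step appears genuinely hard; the only subtlety is the preliminary verification that the supremum is attained inside $(0,1)$, which follows from $I$ vanishing at both ends whenever $\nu\in(0,1/2)$.
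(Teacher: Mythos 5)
Your proof is correct and follows essentially the same route as the paper: both locate the interior maximiser $x_*$ and exploit the critical-point identity $I(x_*)=x_*^{1-2\nu}/\nu=x_*^{\nu}/(1-2\nu)$ to get the bound $3$. The only cosmetic difference is that you justify the existence of the interior maximum via the vanishing boundary values and bound the case $\nu\ge1/3$ by $1/\nu\le3$ without solving for $x_*$ explicitly, whereas the paper computes $x^*$ and the sign of $f'$ in each subcase.
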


\begin{proof}
Let $f(x)=x^\nu\int_x^1s^{-3\nu}\,ds$. 
If $0<\nu<1/3$ then 
\begin{equation}\label{eq: f'(x*)}
\text{$f'(x)>0$ for $0<x<x^*$}
\quad\text{and}\quad
\text{$f'(x)<0$ for $x^*<x<1$,} 
\end{equation}
where~$x^*=[\nu/(1-2\nu)]^{1/(1-3\nu)}<1$.
Since $f'(x)=\nu x^{-1}f(x)-x^{-2\nu}$, 
\[
f(x)\le f(x^*)=\frac{(x^*)^{1-2\nu}}{\nu}=\frac{(x^*)^\nu}{1-2\nu}
	\le 3.
\]
If $\nu=1/3$, then $f(x)=x^{1/3}\log x^{-1}$ and \eqref{eq: f'(x*)}
holds with~$x^*=e^{-3}$, implying that $f(x)\le f(x^*)=3e^{-1}\le 3$.
If $1/3<\nu<1/2$, then~\eqref{eq: f'(x*)} holds 
with~$x^*=[(1-2\nu)/\nu]^{1/(3\nu-1)}<1$ and again
$f(x)\le f(x^*)=(x^*)^{1-2\nu}/\nu\le3$.  Finally, if $\nu=0$ then
$f(x)=1-x\le1$, and
if $\nu=1/2$ then $f(x)=2(1-x^{1/2})\le2$.
\end{proof}

\begin{lemma}\label{lem: <=3 nu>=1/2}
If $1/2\le\nu\le1$ then $x^{\nu-1}\int_1^x s^{1-3\nu}\,ds\le3$
for $1\le x<\infty$.
\end{lemma}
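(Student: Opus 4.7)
The plan is to reduce this lemma to Lemma~\ref{lem: <=3} by inversion of the integration variable. Introduce the substitution $u = 1/s$, which gives $ds = -u^{-2}\,du$ and $s^{1-3\nu} = u^{3\nu-1}$, so that $s^{1-3\nu}\,ds = -u^{3\nu-3}\,du$. With $y = 1/x \in (0,1]$, the limits transform as $s=1 \mapsto u=1$ and $s=x \mapsto u=y$, giving
\[
\int_1^x s^{1-3\nu}\,ds = \int_y^1 u^{3\nu-3}\,du,
\]
while $x^{\nu-1} = y^{1-\nu}$. Setting $\nu' = 1-\nu$, so that $\nu' \in [0,1/2]$ precisely when $\nu \in [1/2,1]$, the quantity to be estimated becomes
\[
x^{\nu-1}\int_1^x s^{1-3\nu}\,ds = y^{\nu'}\int_y^1 u^{-3\nu'}\,du.
\]
The right-hand side is exactly the expression treated in Lemma~\ref{lem: <=3}, with $\nu'$ in place of $\nu$ and $y \in (0,1]$ in place of $x$. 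That lemma yields a bound of $3$, which is what is required.

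Since the entire argument is a single substitution followed by an appeal to the preceding lemma, there is no real obstacle here; the only thing to check carefully is that the endpoint cases $\nu=1/2$ (mapping to $\nu'=1/2$) and $\nu=1$ (mapping to $\nu'=0$) remain inside the hypothesis range of Lemma~\ref{lem: <=3}, which they do. One might alternatively verify the bound directly by splitting into the subcases $\nu \in [1/2,2/3)$, $\nu=2/3$, and $\nu \in (2/3,1]$ and computing the integral explicitly, then locating the unique interior maximum of the resulting elementary function on $[1,\infty)$; however, this would duplicate the casework of Lemma~\ref{lem: <=3}, whereas the inversion substitution makes the two lemmas strictly equivalent and avoids any new computation.
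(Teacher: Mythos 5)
Your proof is correct and is exactly the paper's argument: the paper proves this lemma by the substitutions $x'=x^{-1}$, $s'=s^{-1}$, $\nu'=1-\nu$ in Lemma~\ref{lem: <=3}, which is precisely the inversion you carry out (in more detail). The endpoint check $\nu\in\{1/2,1\}\mapsto\nu'\in\{1/2,0\}$ is a sensible addition, since Lemma~\ref{lem: <=3} does cover $\nu'=0$ and $\nu'=1/2$ explicitly.
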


\begin{proof}
Make the substitutions $x'=x^{-1}$, $s'=s^{-1}$, $\nu'=1-\nu$
in Lemma~\ref{lem: <=3}.
\end{proof}

\begin{figure}
\begin{center}
\includegraphics[scale=0.5,trim=0 60 0 30]{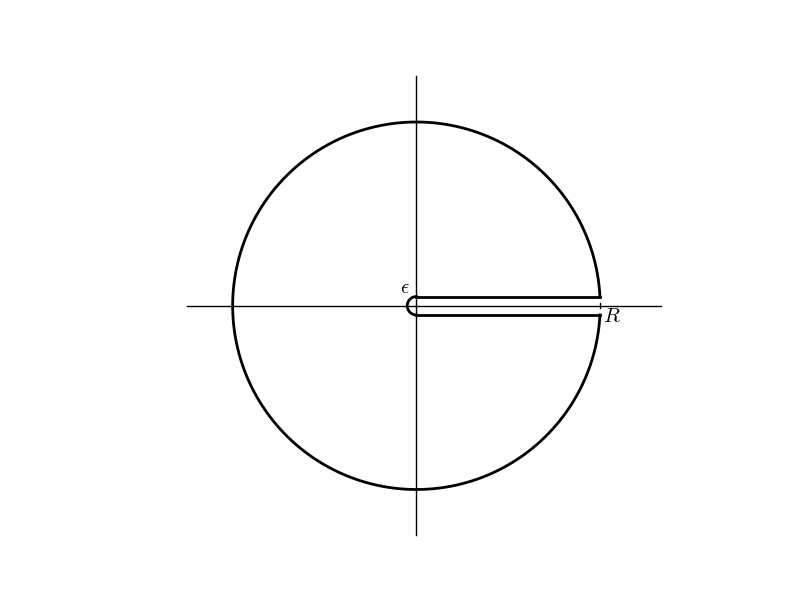} 
\end{center}
\caption{The contour~$\Contour(\epsilon,R)$ used in the proof of
Lemma~\ref{lem: integral=0}.}
\label{fig: path}
\end{figure}

\begin{lemma}\label{lem: integral=0}
If $1/2<\nu<1$ then
\[
\int_0^\infty
\frac{s^{-2\nu}\cos\pi\nu+s^{-3\nu}}{|1+s^{-\nu}e^{i\pi\nu}|^4}\,ds
=\int_0^\infty
\frac{s^\nu+s^{2\nu}\cos\pi\nu}{|s^\nu+e^{i\pi\nu}|^4}\,ds
	=0.
\]
\end{lemma}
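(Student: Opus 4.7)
The plan is to unify the two integrals and then evaluate their common value by a keyhole contour argument. Since $|1+s^{-\nu}e^{i\pi\nu}|^2 = s^{-2\nu}|s^\nu+e^{i\pi\nu}|^2$, multiplying numerator and denominator of the first integrand by $s^{4\nu}$ shows at once that both displayed integrals coincide with
\[
J := \int_0^\infty \frac{s^\nu + s^{2\nu}\cos\pi\nu}{(s^{2\nu}+2s^\nu\cos\pi\nu+1)^2}\,ds.
\]
Writing $\alpha := 1/\nu$ (so that $1 < \alpha < 2$ by the hypothesis $1/2<\nu<1$) and substituting $w = s^\nu$ transforms $J$ into
\[
\nu J = \int_0^\infty \frac{w^\alpha(1+w\cos\pi\nu)}{(w^2+2w\cos\pi\nu+1)^2}\,dw,
\]
so it remains to show this last integral vanishes.

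For that I would integrate
\[
F(z) = \frac{z^\alpha(1+z\cos\pi\nu)}{(z+e^{i\pi\nu})^2(z+e^{-i\pi\nu})^2}
\]
around the keyhole contour~$\Contour(\epsilon,R)$ shown in Figure~\ref{fig: path}, using the branch of $z^\alpha$ with $0\le\arg z<2\pi$ and cut on $[0,\infty)$. The estimates $|F(z)|=O(|z|^{\alpha-3})$ at infinity and $O(|z|^\alpha)$ at the origin---valid precisely because $\alpha\in(1,2)$---make the large- and small-circle contributions vanish as $R\to\infty$ and $\epsilon\to0$, so
\[
(1-e^{2\pi i\alpha})\,\nu J = 2\pi i\,(R_1+R_2),
\]
where $R_j$ denotes the residue of $F$ at the double pole $z_j$, with $z_1 = -e^{i\pi\nu}$ at argument $\pi(1+\nu)$ and $z_2 = -e^{-i\pi\nu}$ at argument $\pi(1-\nu)$.

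To compute the residues I would apply the logarithmic-derivative identity at a double pole. The useful algebraic simplifications are $1+z_1\cos\pi\nu = -i\sin\pi\nu\cdot e^{i\pi\nu}$ and $1+z_2\cos\pi\nu = i\sin\pi\nu\cdot e^{-i\pi\nu}$, together with $z_1+e^{-i\pi\nu}=-2i\sin\pi\nu$ and $z_2+e^{i\pi\nu}=2i\sin\pi\nu$; these collapse the logarithmic-derivative bracket to $(1-\alpha)e^{\mp i\pi\nu}$ and yield
\[
R_1 = -\frac{i(\alpha-1)z_1^\alpha}{4\sin\pi\nu},\qquad R_2 = \frac{i(\alpha-1)z_2^\alpha}{4\sin\pi\nu}.
\]
The finishing step invokes $\alpha\nu = 1$: on the chosen branch, $z_1^\alpha = e^{i\pi\alpha(1+\nu)} = e^{i\pi(\alpha+1)} = -e^{i\pi\alpha}$ and $z_2^\alpha = e^{i\pi\alpha(1-\nu)} = e^{i\pi(\alpha-1)} = -e^{i\pi\alpha}$, so $R_1+R_2 = 0$ and hence $J = 0$. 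The main delicate point is this branch bookkeeping: the spread $(1+\nu)\alpha-(1-\nu)\alpha = 2\alpha\nu$ equals exactly $2$, which is what forces $z_1^\alpha$ and $z_2^\alpha$ to coincide on the branch $[0,2\pi)$ and produces the cancellation. The hypothesis $1/2<\nu<1$ thus plays a double role: it both guarantees $\alpha\in(1,2)$ so the contour integrals converge, and it sits inside the identity $\alpha\nu=1$ that drives the cancellation of residues.
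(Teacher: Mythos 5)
Your proof is correct and follows essentially the same route as the paper: the substitution $w=s^\nu$ reducing the integral to $\nu^{-1}\int_0^\infty w^{1/\nu}(1+w\cos\pi\nu)(w^2+2w\cos\pi\nu+1)^{-2}\,dw$, a keyhole contour around the cut $[0,\infty)$, and cancellation of the residues at the two double poles $-e^{\pm i\pi\nu}$. Your residue values and branch bookkeeping agree with the paper's (which writes the poles as $e^{i\pi(1\mp\nu)}$ and the residues as $\mp i\,\tfrac{1-\nu}{\nu}\,\tfrac{e^{i\pi/\nu}}{4\sin\pi\nu}$), so nothing further is needed.
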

\begin{proof}
Let $p=-\cos\pi\nu$ so that $0<p<1$.  Making the
substitution~$x=s^\nu$, we see that the integral equals $\nu^{-1}I$,
where 
\[
I=\int_0^\infty f(x)\,dx
\quad\text{and}\quad
f(x)=\frac{1-px}{(x^2-2px+1)^2}\,x^{1/\nu}.
\]
We consider the analytic continuation of~$f$ to the cut 
plane~$\C\setminus[0,\infty)$, and note that 
$z^2-2pz+1=(z-\alpha_+)(z-\alpha_-)$ where
$\alpha_\pm=p\pm iq=e^{i\pi(1\mp\nu)}$ and
$q=\sqrt{1-p^2}=\sin\pi\nu$. Thus, $f$ has double poles at
$z=\alpha_+$~and at~$\alpha_-$.  Moreover, since $1<1/\nu<2$ we
see that $f(z)=o(|z|^{-1})$ as~$|z|\to\infty$, and that $f(z)=O(|z|)$
as~$|z|\to0$.  After integrating around the
contour~$\Contour(\epsilon,R)$ shown in Figure~\ref{fig: path} and
sending $\epsilon\to0^+$ and $R\to\infty$, we conclude that
\[
\frac{1-e^{i2\pi/\nu}}{2\pi i}\,I
	=\res_{z=\alpha_+}f(z)+\res_{z=\alpha_-}f(z).
\]
Since 
$(z-\alpha_{\pm})^2f(z)=(1-pz)z^{1/\nu}/(z-\alpha_\mp)^2$
and $\alpha_+^{1/\nu}=-e^{i\pi/\nu}=\alpha_-^{1/\nu}$,
\[
\res_{z=\alpha_\pm}f(z)
	=\lim_{z\to\alpha_\pm}\frac{d}{dz}(z-\alpha_\pm)^2f(z)
	=\frac{d}{dz}\,\frac{(1-pz)z^{1/\nu}}{(z-\alpha_\mp)^2}
	\bigg|_{z=\alpha_\pm}
	=\mp i\,\frac{1-\nu}{\nu}\,\frac{e^{i\pi/\nu}}{4q},
\]
showing that the residues cancel, and therefore $I=0$ because
$e^{i2\pi/\nu}\ne1$.
\end{proof}

Our final result for this section completes the proof of 
Theorem~\ref{thm: delta^n(mu)}, and hence of the error estimates of 
Section~\ref{sec: error}.  

\begin{theorem}\label{thm: rho<=1}
For $0<\mu<\infty$ and $n=1$, $2$, $3$, \dots,
the sequence~\eqref{eq: delta} satisfies
\[
|\delta^n(\mu)|\le C_\nu n^{-1}\rho^2\quad
	\text{if $\rho=\mu n^\nu\le1$.}
\]
\end{theorem}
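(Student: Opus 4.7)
The plan is to start from the representation~\eqref{eq: delta cut} combined with Lemma~\ref{lem: diff squares} to write
\[
\delta^n(\mu)=\frac{\sin\pi\nu}{\pi}\int_0^\infty e^{-ns}\,\mu s^{-\nu}\,
\frac{\mu B_1(s)+\mu^2 B_2(s)}
{|1+\mu\psi_+(s)|^2\,|1+\mu s^{-\nu}e^{-i\pi\nu}|^2}\,\frac{ds}{s},
\]
which displays the overall $\mu^2$ prefactor needed to produce the $\rho^2$ on the right-hand side; the task is then to show the remaining integral is at most of order $n^{2\nu-1}$. The natural move is to rescale via the substitution $s=\mu^{1/\nu}u$, which turns $\mu s^{-\nu}$ into $u^{-\nu}$ and $ns$ into $\rho^{1/\nu}u$. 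Using the leading-order expansions $B_1(s)\approx s^{1-\nu}\cos\pi\nu$, $B_2(s)\approx s^{1-2\nu}$ supplied by Lemma~\ref{lem: B asymp}, together with $\psi_+(s)\approx s^{-\nu}e^{-i\pi\nu}$, the principal part of the integrand collapses to $\mu^{1/\nu}$ times $(u^{-2\nu}\cos\pi\nu+u^{-3\nu})/|1+u^{-\nu}e^{-i\pi\nu}|^4$, which is exactly the integrand singled out in Lemma~\ref{lem: integral=0}.

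The argument then branches on $\nu$. When $0<\nu\le 1/2$, the outside factor $\mu^{1/\nu}=\rho^{1/\nu}/n$ already obeys $\mu^{1/\nu}\le\rho^2/n$ because $1/\nu\ge 2$ and $\rho\le 1$, so a direct estimate suffices: split the integration range at a suitable scale, use Lemmas~\ref{lem: |1+X...|}~and~\ref{lem: |1+mu...|} for the denominator lower bounds, and invoke Lemma~\ref{lem: <=3} to control the $s^{-3\nu}$-type contribution coming from the $\mu^3 s^{-\nu-1}B_2$ term near the origin. The higher-order corrections hidden in Lemma~\ref{lem: B asymp} (including the fact that $\cos\pi\nu=0$ at the boundary $\nu=1/2$) only improve the bound.

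The case $1/2<\nu<1$ is the main obstacle: now $\mu^{1/\nu}=\rho^{1/\nu}/n$ exceeds $\rho^2/n$, so the outside factor is too large and the required smallness must come from cancellation inside the integral. This is where Lemma~\ref{lem: integral=0} is crucial: the principal contribution, i.e.\ what one gets on setting $e^{-\rho^{1/\nu}u}\equiv 1$ and freezing $B_1,B_2,\psi_+$ at their leading asymptotic forms, is $\mu^{1/\nu}$ times a vanishing integral and so cancels identically. The remainder is controlled by
\[
\int_0^\infty\bigl(e^{-\rho^{1/\nu}u}-1\bigr)\,
\frac{u^{-2\nu}\cos\pi\nu+u^{-3\nu}}{|1+u^{-\nu}e^{-i\pi\nu}|^4}\,du,
\]
which I would estimate by splitting at $u=\rho^{-1/\nu}$, using $|e^{-\rho^{1/\nu}u}-1|\le\min(\rho^{1/\nu}u,1)$, and applying Lemma~\ref{lem: <=3 nu>=1/2} to the resulting $u^{1-3\nu}$-type integrals. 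The outcome is a bound of order $\rho^{2-1/\nu}$, which combines with the outside $\mu^{1/\nu}=\rho^{1/\nu}/n$ to give exactly $\rho^2/n$.

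The hardest piece of book-keeping will be the genuine remainders that appear when $B_1,B_2$ and $\psi_+$ are replaced by their leading asymptotic forms: each higher-order term in Lemma~\ref{lem: B asymp} brings an extra factor of $s$ (and the replacement $\psi_+(s)\mapsto s^{-\nu}e^{-i\pi\nu}$ in the denominator costs $\mu B_+(s)=O(\mu s^{1-\nu})$), so these pieces are better-behaved at the origin and ought to be absorbable into the same $\rho^2/n$ bound by the same splitting scheme; but care is needed to avoid spurious logarithmic losses at the boundary value $\nu=1/2$ and to verify that all the remainder pieces really do integrate to the right order uniformly in $\mu$.
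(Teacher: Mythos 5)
Your plan is correct and follows essentially the same route as the paper's proof: extract the overall $\mu^2$ from $\mu B_1+\mu^2B_2$, handle $0<\nu\le1/2$ by a direct estimate using Lemmas~\ref{lem: |1+X...|}, \ref{lem: |1+mu...|} and \ref{lem: <=3}, and for $1/2<\nu<1$ exploit the exact cancellation of Lemma~\ref{lem: integral=0} for the frozen principal term, with the $1-e^{-ns}$ correction and the denominator-replacement correction controlled via $|1-e^{-x}|\le\min(x,1)$, $|\cos\pi\nu|\le\pi(\nu-\tfrac12)$ and Lemma~\ref{lem: <=3 nu>=1/2}. The only organizational difference is that the paper confines the small-$s$ asymptotic forms of $B_1,B_2$ to the interval $[0,1]$ and treats $[1,\infty)$ separately with the large-$s$ bounds (so the frozen forms are never used where they are inaccurate), which is exactly the book-keeping point you flag at the end.
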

\begin{proof}
By Lemma~\ref{lem: B asymp},
\[
\mu B_1(s)+\mu^2 B_2(s)
	=s\bigl(\mu s^{-\nu}\cos\pi\nu+(\mu s^{-\nu})^2
	+O(\mu+\mu^2 s^{-\nu})\bigr)\quad\text{as $s\to0^+$,}
\]
and $\mu B_1(s)+\mu^2 B_2(s)=O(\mu s^{-\nu}+\mu^2s^{-2\nu})$
as $s\to\infty$, so \eqref{eq: delta cut} implies that
\begin{align*}
|\delta^n(\mu)|&=\biggl|\frac{\sin\pi\nu}{\pi} \int_0^\infty 
	e^{-ns}\mu s^{-\nu}
	\frac{\mu B_1(s)+\mu^2B_2(s)}{|1+\mu s^{-\nu}e^{i\pi\nu}|^2
|1+\mu\psi_+(s)|^2}\,\frac{ds}{s}\biggr|\\
	&\le\frac{\sin\pi\nu}{\pi}\bigl(
	|I_1|+C_\nu I_2+C_\nu I_3\bigr),
\end{align*}
where, using Lemmas \ref{lem: |1+X...|}~and \ref{lem: |1+mu...|},
\begin{gather*}
I_1=\int_0^1 e^{-ns}\mu s^{-\nu}\,
\frac{\mu s^{-\nu}\cos\pi\nu+(\mu s^{-\nu})^2}%
{|1+\mu s^{-\nu}e^{i\pi\nu}|^2|1+\mu\psi_+(s)|^2}\,ds,\\
I_2=\int_0^1e^{-ns}\mu s^{-\nu}\,
\frac{\mu+\mu^2 s^{-\nu}}{(1+\mu^2 s^{-2\nu})^2}\,ds,\quad
I_3=\int_1^\infty e^{-ns}\mu s^{-\nu}\,
\frac{\mu s^{-\nu}+\mu^2 s^{-2\nu}}%
{(1+\mu^2 s^{-2\nu})^2}\,\frac{ds}{s}.
\end{gather*}
Put $f(x)=(x+x^2)/(1+x^2)^2$ so that
\[
I_2=\mu\int_0^1 e^{-ns}f(\mu s^{-\nu})\,ds
	= n^{-1-\nu}\rho\int_0^n e^{-s}f(\rho s^{-\nu})\,ds.
\]
Since $f(x)\le\min(2x,x^{-2})$ we have
$f(\rho s^{-\nu})\le C\min(\rho^{-2}s^{2\nu}, \rho s^{-\nu})$
and thus
\begin{align*}
n^{1+\nu}\rho^{-1}I_2
	&\le C\rho^{-2}\int_0^{\rho^{1/\nu}}e^{-s}s^{2\nu}\,ds
	+C\rho\int_{\rho^{1/\nu}}^n e^{-s}s^{-\nu}\,ds\\
	&\le C\int_0^{\rho^{1/\nu}}e^{-s}\,ds
	+C\rho\int_{\rho^{1/\nu}}^1s^{-\nu}\,ds
	+C\rho\int_1^\infty e^{-s}\,ds\\
	&\le C\rho^{1/\nu}+C(1-\nu)^{-1}\rho+C\rho
	\le C(1-\nu)^{-1}\rho+C\rho^{1/\nu}\le C_\nu\rho,
\end{align*}
implying $I_2\le C_\nu n^{-1-\nu}\rho^2 \le C_\nu n^{-1}\rho^2$.  
Noting 
that $\mu=\rho n^{-\nu}\le1$, we have
\[
I_3\le \int_1^\infty e^{-ns}\mu^2 s^{-2\nu-1}\,ds
	\le\mu^2\int_1^\infty e^{-ns}\,ds=\mu^2\,\frac{e^{-n}}{n}
	\le n^{-1}\mu^2 =n^{-1-2\nu}\rho^2,
\]
and therefore $I_3\le n^{-1}\rho^2$.

It remains to estimate~$I_1$.  First consider the case~$0<\nu<1/2$, in
which $\cos\pi\nu>0$.  Put $g(x)=(x^2\cos\pi\nu+x^3)/(1+x^2)^2$, so 
that	
\[
I_1\le C_\nu\int_0^1e^{-ns}g(\mu s^{-\nu})\,ds
	=C_\nu n^{-1}\int_0^ne^{-s}g(\rho s^{-\nu})\,ds.
\]
Since $g(x)\le\min(2x^2,x^{-2}\cos\pi\nu+x^{-1})$ we
have 
\[
g(\rho s^{-\nu})\le C\min\bigl(\rho^{-1}s^\nu, 
\rho^2s^{-2\nu}\cos\pi\nu+\rho^3s^{-3\nu}\bigr)
\]
and hence $\int_0^ne^{-s}g(\rho s^{-\nu})\,ds$ is bounded by
\begin{align*}
&C\rho^{-1}\int_0^{\rho^{1/\nu}}s^\nu\,ds
	+C\rho^2\cos\pi\nu\int_{\rho^{1/\nu}}^n
	e^{-s}s^{-2\nu}\,ds
	+C\rho^3\int_{\rho^{1/\nu}}^n e^{-s}s^{-3\nu}\,ds\\
	&\le C\rho^{1/\nu}
	+C\rho^2\int_{\rho^{1/\nu}}^1(1-2\nu)s^{-2\nu}\,ds
	+C\rho^3\int_{\rho^{1/\nu}}^1s^{-3\nu}\,ds
		+C\rho^2\int_1^\infty e^{-s}\,ds.
\end{align*}
Applying Lemma~\ref{lem: <=3} with~$x=\rho^{1/\nu}$ and noting that
$1/\nu>2$, it follows that 
$\int_0^ne^{-s}g(\rho s^{-\nu})\,ds\le 
C\bigl(\rho^{1/\nu}+\rho^2\bigr)$ and hence
$I_1\le C_\nu n^{-1}\rho^2$.

If $\nu=1/2$, then $\cos\pi\nu=0$ and the argument above again shows
that $I_1\le C_\nu n^{-1}\rho^2$.  Thus,
assume now that $1/2<\nu<1$ and note $\cos\pi\nu<0$.  Since
\begin{multline*}
\frac{e^{-ns}}%
{|1+\mu s^{-\nu}e^{i\pi\nu}|^2|1+\mu\psi_+(s)|^2}
  =\frac{1}{|1+\mu s^{-\nu}e^{i\pi\nu}|^4}\\
  -\frac{1-e^{-ns}}{|1+\mu s^{-\nu}e^{i\pi\nu}|^2|1+\mu\psi_+(s)|^2}
+\frac{|1+\mu s^{-\nu}e^{i\pi\nu}|^2-|1+\mu\psi_+(s)|^2}%
{|1+\mu s^{-\nu}e^{i\pi\nu}|^4|1+\mu\psi_+(s)|^2}
\end{multline*}
and, by Lemma~\ref{lem: integral=0},
\begin{align*}
\int_0^1\frac{(\mu s^{-\nu})^2\cos\pi\nu+(\mu s^{-\nu})^3}%
{|1+\mu s^{-\mu}e^{i\pi\nu}|^4}\,ds
  &=\mu^{1/\nu}\int_0^{\mu^{-1/\nu}}
\frac{s^{-2\nu}\cos\pi\nu+s^{-3\nu}}{|1+s^{-\nu}e^{i\pi\nu}|^4}\,ds\\
  &=-\mu^{1/\nu}\int_{\mu^{-1/\nu}}^\infty
\frac{s^{-2\nu}\cos\pi\nu+s^{-3\nu}}{|1+s^{-\nu}e^{i\pi\nu}|^4}\,ds,
\end{align*}
we have
\begin{equation}\label{eq: I1 tricky} 
|I_1|\le C_\nu\bigl(J_1+J_2+J_3\bigr),
\end{equation}
where
\begin{align*}
J_1&=\mu^{1/\nu}\int_{\mu^{-1/\nu}}^\infty
\frac{s^{-2\nu}\cos\pi\nu+s^{-3\nu}}{(1+\mu^2s^{-\nu})^2}\,ds,\\
J_2&=\int_0^1(1-e^{-ns})\,
\frac{(\mu s^{-\nu})^2|\cos\pi\nu|+(\mu s^{-\nu})^3}%
{(1+\mu^2s^{-2\nu})^2}\,ds,\\
J_3&=\int_0^1
\bigl(|1+\mu s^{-\nu}e^{i\pi\nu}|^2-|1+\mu\psi_+(s)|^2\bigr)
\frac{(\mu s^{-\nu})^2|\cos\pi\nu|+(\mu s^{-\nu})^3}%
{(1+\mu^2s^{-2\nu})^3}\,ds.
\end{align*}
First, because $\mu^{1/\nu}=n^{-1}\rho^{1/\nu}$~and
$|\cos\pi\nu|=\sin\pi(\nu-\tfrac12)\le\pi(\nu-\tfrac12)$,  
\begin{align*}
J_1&\le Cn^{-1}\rho^{1/\nu}\int_{n\rho^{-1/\nu}}^\infty\bigl(
  (2\nu-1)s^{-2\nu}+s^{-3\nu}\bigr)\,ds\\
   &\le Cn^{-1}\rho^{1/\nu}\bigl[(n\rho^{-1/\nu})^{1-2\nu}
   +(n\rho^{-1/\nu})^{1-3\nu}\bigr]\\
   &=Cn^{-2\nu}\rho^2+Cn^{-3\nu}\rho^3\le Cn^{-1}\rho^2.
\end{align*}
Second, since $1-e^{-x}\le x$ and $\mu^{-1/\nu}=n\rho^{-1/\nu}\ge1$,
we see that $n\rho^{-1/\nu}J_2$ equals
\begin{align*}
\int_0^{n\rho^{-1/\nu}}&(1-e^{-\rho^{1/\nu}s})
\frac{s^{-2\nu}|\cos\pi\nu|+s^{-3\nu}}{(1+s^{-2\nu})^2}\,ds
\le C\int_0^1\frac{(1-e^{-\rho^{1/\nu}s})s^{-3\nu}}%
{(1+s^{-2\nu})^2}\,ds\\
	&\qquad{}
	+C\int_1^{n\rho^{-1/\nu}}(1-e^{-\rho^{1/\nu}s})
	\bigl(s^{-2\nu}(\nu-\tfrac12)+s^{-3\nu}\bigr)\,ds\\
	&\le C\rho^{1/\nu}\int_0^1 s^{\nu+1}\,ds
	+C\int_{\rho^{1/\nu}}^n(1-e^{-s})\bigl(
	\rho^3s^{-3\nu}+(\nu-\tfrac12)\rho^2s^{-2\nu}\bigr)\,ds.
\end{align*}
Since $\rho^3s^{-3\nu}\le\rho^2s^{-2\nu}$ for~$s\ge\rho^{1/\nu}$, the 
last integral is bounded by
\begin{multline*}
\int_{\rho^{1/\nu}}^1 2\rho^2s^{1-2\nu}\,ds
+C\int_1^n (2\nu-1)\bigl(\rho^3s^{-3\nu}+\rho^2s^{-2\nu}\bigr)\,ds\\
	  \le C\int_{\rho^{1/\nu}}^1\rho^2s^{-1}\,ds
	  +C\rho^3+C\rho^2
\le C\rho^{3-1/\nu}+C\rho^2\log\rho^{-1/\nu},	  
\end{multline*}
and thus
\[
J_2\le Cn^{-1}\rho^{1/\nu}\bigl(\rho^{1/\nu}+C\rho^{3-1/\nu}
  +\nu^{-1}\rho^2\log\rho^{-1}\bigr)
\le C_\nu n^{-1}\rho^2.
\]

Third, by Lemmas \ref{lem: diff squares}~and \ref{lem: B asymp},
\begin{align*}
J_3&\le\int_0^1\bigl(\mu s^{1-\nu}+\mu^2s^{1-2\mu}\bigr)\,
  \frac{(\mu s^{-\nu})^2+(\mu s^{-\nu})^3}{(1+\mu s^{-\nu})^3}\,ds\\
  &=\mu^{1+1/\nu}\int_0^{\mu^{-1/\nu}}
  \frac{s(s^{-\nu}+s^{-2\nu})(s^{-2\nu}+s^{-3\nu})}{(1+s^{-2\nu})^3}
  \,ds\\
  &\le(\rho n^{-\nu})^{1+1/\nu}\biggl(\int_0^1 s^{1+\nu}\,ds
    +\int_1^{n\rho^{-1/\nu}}s^{1-3\nu}\,ds\biggr),
\end{align*}
and applying Lemma~\ref{lem: <=3 nu>=1/2} with~$x=n\rho^{-1/\nu}$
gives
$\int_1^{n\rho^{-1/\nu}}s^{1-3\nu}\,ds\le3(n\rho^{-1/\nu})^{1-\nu}$
so $J_3\le Cn^{-\nu-1}\rho^{1+1/\nu}(1+n^{1-\nu}\rho^{1-1/\nu})
\le C(n^{-\nu-1}\rho^{1+1/\nu}+n^{-2\nu}\rho^2)
\le Cn^{-1}\rho^2$.
Inserting the foregoing estimates for $J_1$, $J_2$~and 
$J_3$ into~\eqref{eq: I1 tricky} gives the desired estimate
$|I_1|\le Cn^{-1}\rho^2$, which completes the proof.
\end{proof}

\section{Numerical example}\label{sec: numerical examples}
We consider a 1D example in which $u=u(x,t)$ 
satisfies~\eqref{eq: ivp} with $Au=-(\kappa u_x)_x$ 
for~$x\in\Omega=(-1,1)$, subject to homogeneous Dirichlet boundary 
conditions~$u(\pm1,t)=0$ for~$0<t\le1$.  We 
choose~$\kappa=4/\pi^2$ so the orthonormal eigenfunctions and 
corresponding eigenvalues of~$A$ are
\[
\phi_m(x)=\sin\frac{m\pi}{2}(x+1)
\quad\text{and}\quad
\lambda_m=m^2\quad\text{for $m=1$, $2$, $3$, \dots.}
\]
For our initial data we choose simply the constant 
function~$u_0(x)=\pi/4$, which has the Fourier sine coefficients
\[
u_{0m}=\iprod{u_0,\phi_m}=\begin{cases}
	m^{-1},&m=1, 3, 5, \ldots,\\
	0,&m=2, 4, 6, \ldots.
\end{cases}
\]
Although infinitely differentiable, the function~$u_0$ is  
``non-smooth'' because it fails to satisfy the boundary conditions, 
and as a result the solution~$u(x,t)$ is discontinuous at~$x=\pm1$
when~$t=0$. In fact, if~$r<1/4$ then
\[
\|A^ru_0\|^2=\sum_{m=1}^\infty\bigl(\lambda_m^ru_{0m}\bigr)^2
  =\sum_{j=1}^\infty(2j-1)^{4r-1}
    \le\frac{C}{1-4r},
\]
but if $r\ge1/4$ then $u_0\notin D(A^r)$.

\begin{figure}
\begin{center}
\begin{minipage}[b]{0.48\textwidth}
\begin{center}
\includegraphics[width=\textwidth,%
trim=90 30 0 20]{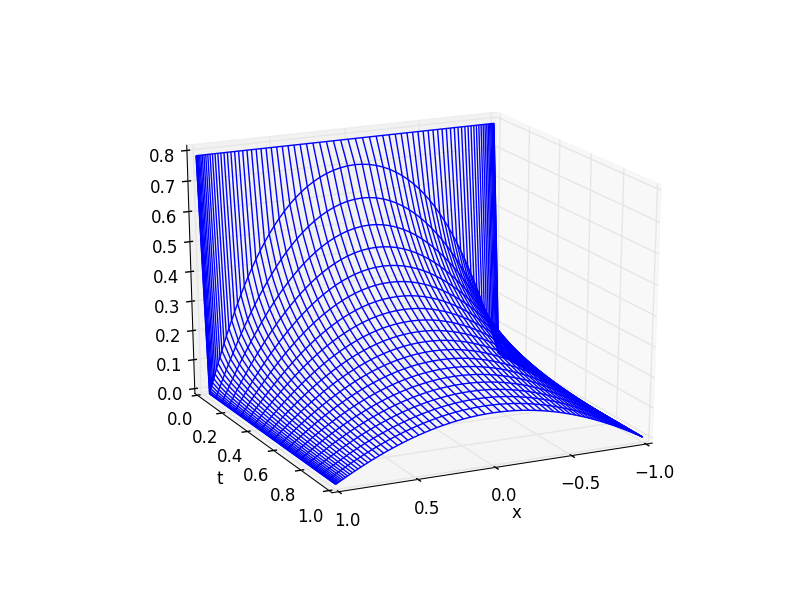}
\end{center}
\end{minipage}
\begin{minipage}[b]{0.48\textwidth}
\begin{center}
\includegraphics[width=\textwidth,%
trim=90 30 0 20]{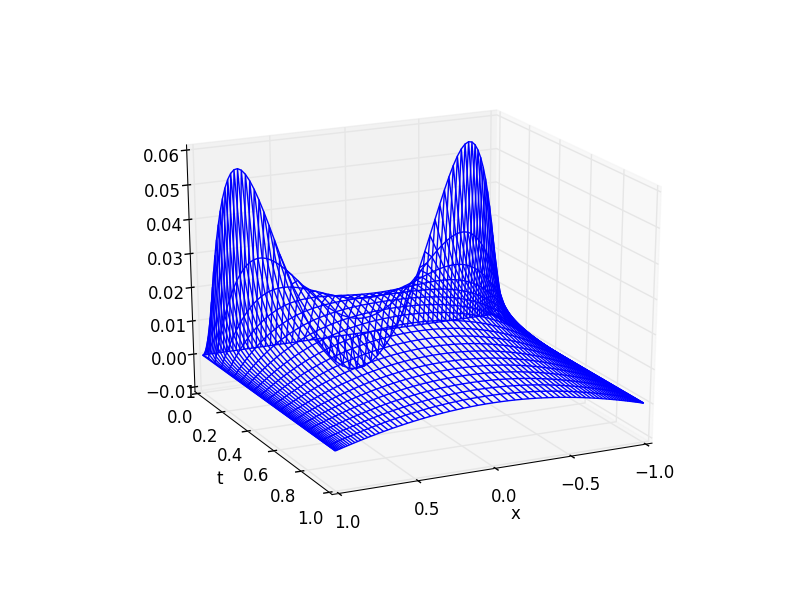}
\end{center}
\end{minipage}
\end{center}
\caption{Reference solution (left) and error (right).}
\label{fig: example}
\end{figure}

Using a closed form expression for~$\hat u(x,z)$, we construct a 
reference solution by applying a spectrally accurate numerical 
method~\cite{McLeanThomee2010} for inversion of the Laplace 
transform.  To compute the discrete-time solution~$U^n$ we
discretize also in space using piecewise linear finite elements on a
fixed nonuniform mesh with $M$~subintervals.  In view of the 
discontinuity in the solution when~$t=0$, we concentrate the 
spatial grid points near~$x=\pm1$, but always use a constant 
timestep~$\Dt=1/N$. 

Figure~\ref{fig: example} shows the reference solution and the error
in the case~$\nu=0.75$ using $N=20$ time steps and $M=80$ spatial 
subintervals. 
As expected, the error is largest at the first time level~$t_1$ and 
then decays as~$t$ increases.
We put 
$r=\tfrac{1}{4}-\epsilon$ where $\epsilon^{-1}=\max(4,\log t_n^{-1})$,
so that $t_n^{-\epsilon}\le C$ and, by 
Theorem~\ref{thm: interp error},
\[
\|U^n-u(t_n)\|\le Ct_n^{\nu/4-1}\Dt\,\sqrt{\max(1,\log t_n^{-1})}
  \quad\text{for $0<t_n\le1$.}
\]
Thus, ignoring the logarithm and putting $\nu=3/4$, we expect to
observe errors of order~$t_n^{-13/16}\Dt$.

\begin{figure}
\begin{center}
\includegraphics[width=0.8\textwidth,%
trim=0 40 0 20]{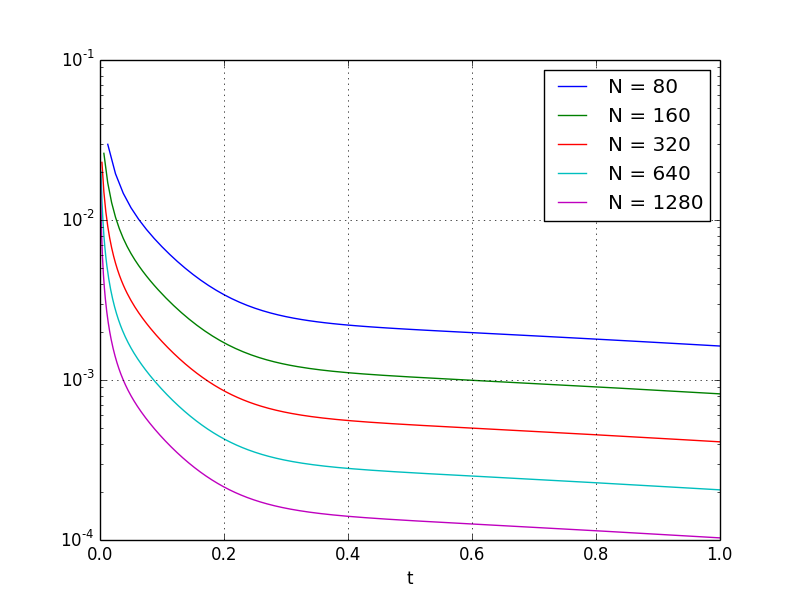}
\end{center}
\caption{The error $\|U^n-u(t_n)\|$ as a function of~$t_n$.}
\label{fig: log errors}
\end{figure}

\begin{table}
\begin{center}
\renewcommand{\arraystretch}{1.2}
\begin{tabular}{r|cc|cc|cc}
\multicolumn{1}{c|}{$N$}&
\multicolumn{2}{c|}{$\alpha=0.6$}&
\multicolumn{2}{c|}{$\alpha=0.7$}&
\multicolumn{2}{c}{$\alpha=13/16$}\\
\hline
  80&  2.14e-03&       &  1.48e-03&       &  1.16e-03&      \\
 160&  1.24e-03&0.788  &  7.94e-04&0.894  &  5.91e-04&0.978 \\
 320&  7.20e-04&0.787  &  4.29e-04&0.888  &  2.98e-04&0.988 \\
 640&  4.17e-04&0.787  &  2.32e-04&0.887  &  1.50e-04&0.992 \\
1280&  2.42e-04&0.787  &  1.25e-04&0.887  &  7.53e-05&0.993 
\end{tabular}
\end{center}
\caption{Weighted errors and observed convergence 
rates from~\eqref{eq: wt error}.}
\label{tab: wt errors}
\end{table}

\begin{figure}
\begin{center}
\includegraphics[width=0.8\textwidth,%
trim=0 40 0 20]{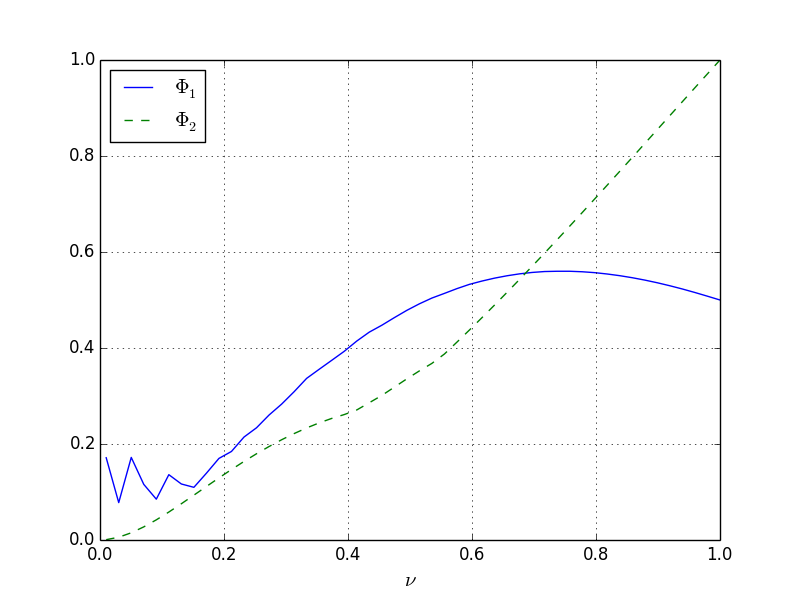}
\end{center}
\caption{The functions $\Phi_1$~and $\Phi_2$ from \eqref{eq: Phi}.}
\label{fig: Phi}
\end{figure}

Figure~\ref{fig: log errors} shows how the error varies with~$t_n$
for a sequence of solutions obtained by successively doubling~$N$
(and hence halving $\Dt$), using a log scale. (The same spatial mesh 
with~$M=1000$ subintervals was used in all cases.)  
Table~\ref{tab: wt errors} provides an alternative view of this data, 
listing the weighted error and its associated convergence rate,
\begin{equation}\label{eq: wt error}
E_N=\max_{1\le t_n\le 1/2}t_n^\alpha\|U^n-u(t_n)\|
\quad\text{and}\quad
\rho_N=\log_2(E_N/E_{N/2}),
\end{equation}
so that if $E_N$ decays like $N^{-\rho}=\Dt^\rho$ then 
$\rho\approx\rho_N$.
As expected, $\rho_N\approx1$ when $\alpha=13/16=0.8125$, but 
the rate deteriorates for smaller values of~$\alpha$.  

Our analysis in Section~\ref{sec: technical} does not reveal how the 
constant in Theorem~\ref{thm: delta^n(mu)} depends on the fractional 
diffusion exponent~$\nu$, because the proof of 
Lemma~\ref{lem: |1+mu...|} is not constructive.  The
factor~$(1-\nu)^{-2}$ in the estimate of Lemma~\ref{lem: |1+X...|}
raises the question of whether the DG error becomes large if~$\nu$ is 
very close to~$1$.  We therefore investigated numerically the values
of
\begin{equation}\label{eq: Phi}
\begin{aligned}
\Phi_1(\nu)&=\sup_{0<\mu<\infty}\max_{n^\nu\le\mu^{-1}}
  n^{1-2\nu}\mu^{-2}\delta^n(\mu),\\
\Phi_2(\nu)&=\sup_{0<\mu<\infty}\sup_{n^\nu\ge\mu^{-1}}
  n^{1+\nu}\mu\delta^n(\mu),
\end{aligned}
\end{equation}
since $C=\max\bigl(\Phi_1(\nu),\Phi_2(\nu)\bigr)$ is the best possible
constant in Theorem~\ref{thm: delta^n(mu)}.  Figure~\ref{fig: Phi}
shows approximations of the graphs of $\Phi_1$~and $\Phi_2$, obtained
by restricting $\mu$ to the discrete values~$2^j$ for~$-18\le j\le20$,
and resticting $n$ to the range~$1\le n\le200$.  We solved 
\eqref{eq: ivp um}~and \eqref{eq: Unm} with $u_{0m}=1=U^0_m$ and
$\lambda_m=\mu/\Dt^\nu$ to compute $\delta^n(\mu)=U^n_m-u_m(t_n)$.
The evaluation of~$\Phi_1(\nu)$ is problematic for~$\nu$ near zero 
because our values for~$u_m(t_n)$ are not sufficiently accurate, but 
it seems reasonable to conjecture that $C\le1$ for all~$\nu$.

\paragraph{Acknowledgement} We thank Peter Brown for help with the 
proof of Lemma~\ref{lem: integral=0}.
\bibliographystyle{plain}
\bibliography{nonsmoothrefs}
\end{document}